\documentclass[12pt]{amsart}
\usepackage[T1]{fontenc}
\usepackage{lmodern}
\usepackage[margin=1in]{geometry}
\usepackage{amsmath,amssymb,amsthm,mathtools,microtype}
\usepackage{xurl,needspace}
\usepackage{xcolor}
\usepackage[hidelinks]{hyperref}
\hypersetup{pdftitle={On a conjecture of Browning and Sawin on random hypersurfaces with sign coefficients},pdfauthor={Ken Ono and Ashvin Swaminathan}}
\numberwithin{equation}{section}
\newtheorem{theorem}{Theorem}[section]
\newtheorem{proposition}[theorem]{Proposition}
\newtheorem{lemma}[theorem]{Lemma}
\newtheorem{corollary}[theorem]{Corollary}
\newtheorem*{conjecture}{Conjecture}
\theoremstyle{definition}
\newtheorem*{remark}{Remark}
\newtheorem{example}[theorem]{Example}
\newcommand{\F}{\mathbb F}
\newcommand{\Z}{\mathbb Z}
\newcommand{\Q}{\mathbb Q}
\newcommand{\C}{\mathbb C}
\newcommand{\A}{\mathbb A}
\newcommand{\PP}{\mathbb P}
\newcommand{\Prob}{\mathbf P}
\newcommand{\Sing}{\operatorname{Sing}}
\newcommand{\eps}{\varepsilon}
\newcommand{\m}{\mathfrak m}
\newcommand{\del}{\partial}
\newcommand{\Th}{\Theta}

\title[On a conjecture of Browning and Sawin]{On a conjecture of Browning and Sawin on random hypersurfaces with sign coefficients}
\author{Ken Ono}
\address{Axiom Math, 124 University Avenue, Palo Alto, CA 94301, USA}
\email{ken@axiommath.ai}
\author{Ashvin A.~Swaminathan}
\address{Axiom Math, 124 University Avenue, Palo Alto, CA 94301, USA}
\email{ashvin@axiommath.ai}
\subjclass[2020]{Primary 14J70; Secondary 11C08, 14G15, 60C05}
\keywords{Random hypersurfaces, sign coefficients, geometric smoothness, finite-field Bertini}
\date{}
\begin{document}
\begin{abstract}
Browning and Sawin conjectured that random hypersurfaces with sign coefficients are smooth with
probability tending to one as the degree grows. We prove this conjecture and obtain a quantitative
bound. For each $n\geq1$, a degree $d$ form in $n+1$ variables, with independent uniform
coefficients in $\{-1,1\}$, defines a singular complex hypersurface with probability
$O_n(d^{-1/2})$. The positive-dimensional singular loci occur with exponentially small
probability. For $n\geq3$, the same exponential bound holds for failure of absolute
irreducibility. These results have been formalized in Lean by AxiomProver assuming existing literature.
\end{abstract}
\maketitle

\section{Introduction}\label{sec:intro}

A general homogeneous polynomial over $\C$ defines a smooth hypersurface. It is natural to ask
whether smoothness remains likely when every coefficient is restricted to a sign $\pm 1$. Generic
smoothness alone does not answer this question. As the degree increases, both the coefficient
space and the exceptional algebraic set of singular forms change, and the geometric statement
does not control the proportion of sign vectors in that set. Browning and Sawin formulated this
question as a conjecture in their work on random Diophantine equations of large degree
\cite[Conjecture~1.3]{BS}. Here we prove their conjecture and obtain a quantitative bound.

Fix integers $n\geq1$ and $d\geq3$. For a tuple 
$\alpha:=(\alpha_0,\dots, \alpha_n)$ with nonnegative integer entries, we let  $$|\alpha|\coloneqq\alpha_0+\cdots+\alpha_n\qquad  {\text {\rm and}}\qquad x^\alpha\coloneqq x_0^{\alpha_0}\cdots
x_n^{\alpha_n}.
$$
Let $\Prob$ denote probability, and choose independent random signs
$\eps_\alpha$, each equal to $1$ or $-1$ with probability $1/2$. We study
\begin{equation}\label{eq:model}
 f(x_0,\ldots,x_n)\coloneqq\sum_{|\alpha|=d}\eps_\alpha x^\alpha.
\end{equation}
Every degree $d$ monomial occurs in this sum. Let $\mathcal B_{d,n}$ denote the set of these
forms, and put $N_{d,n}\coloneqq\binom{n+d}{n}$. Therefore, \eqref{eq:model} gives the uniform
distribution on the $2^{N_{d,n}}$ elements of $\mathcal B_{d,n}$.

Write $\PP^n_k$ for projective $n$ space over a field $k$, and let $X_f\subset\PP^n_\C$ be the
closed hypersurface defined by $f$. We retain its scheme structure, including any repeated
factors. By smoothness we mean geometric smoothness, which excludes singular points over an
algebraic closure, not just rational or real singular points. 

In this notation, Browning and Sawin formulated the following conjecture.

\begin{conjecture}[Browning--Sawin {\cite[Conjecture~1.3]{BS}}]
For every integer $n\geq2$, we have
\[
 \lim_{d\to\infty}
 \frac{\#\{f\in\mathcal B_{d,n}:X_f\text{ is smooth}\}}
      {2^{N_{d,n}}}=1.
\]
\end{conjecture}

\noindent
Browning and Sawin identify the defining forms $f$ and $-f$, whereas we count both forms.
This does not change the smoothness probability. We prove the following quantitative result,
which also includes projective dimension one.

\begin{theorem}\label{thm:main}
For every integer $n\geq1$, there is a constant $C_n>0$ such that the following bound
holds for every integer $d\geq3$:
\begin{equation}\label{eq:main}
 \Prob(X_f\text{ is singular})\leq C_n d^{-1/2}.
\end{equation}
\end{theorem}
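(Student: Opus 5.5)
We reduce modulo a prime and use a Poonen-style finite-field Bertini sieve, with the prime tied to the degree. If $X_f$ is singular over $\C$, then the discriminant $\operatorname{Disc}_{d,n}(f)\in\Z$ vanishes. Hence for every prime $p$ the reduction $\bar f\in\F_p[x_0,\dots,x_n]$ defines a hypersurface singular over $\overline{\F}_p$; when $p$ divides a coefficient of $\operatorname{Disc}$ this still holds, since the discriminant is a universal integer polynomial. So it suffices to bound $\Prob(X_{\bar f}\text{ singular over }\overline{\F}_p)$ for one well-chosen $p$. For $p=3$ the reduction of $\eps_\alpha$ is uniform on $\{1,2\}=\F_3^\times$. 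This is not uniform on $\F_3$, so we work with a measure in which every coefficient is nonzero. The key observation is that the sign model is the \emph{uniform} measure restricted to the event that all coefficients are nonzero. For nonuniform but independent coefficients with each residue having probability bounded below, the local sieve computations still go through.

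\textbf{Step 1: closed points of bounded degree.} Fix a closed point $P\in\PP^n_{\F_3}$ with $\deg P=e$ and $e$ small compared with $d$, say $e\le c\log d$. Singularity of $X_{\bar f}$ at $P$ imposes $n+1$ linear conditions on the coefficient vector, namely that $f$ and its partials vanish at $P$. By the standard surjectivity of $H^0(\mathcal O(d))\to \mathcal O/\m_P^2$ for $d\ge 2e$, these conditions depend on a set of $(n+1)e$ coefficients on which the map is bijective. Conditioning on the other coefficients and using independence, the probability is at most $\max_t\Prob(\text{those coefficients equal }t)\le (1/2)^{(n+1)e}$. Since $\#\{P:\deg P=e\}\ll 3^{ne}$ and $3^n/2^{n+1}$ may exceed $1$, this crude bound fails. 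We therefore use a larger prime $p$ instead, where each of the $p$ residues has probability in $\{0,\tfrac12\}$, and take the sign model mod $p$ for a fixed large $p$. The relevant bound is then $(1/2)^{(n+1)e}$ against $p^{ne}$ points, which is still bad. So instead of conditioning we use Fourier analysis. The probability that a linear functional $\ell$ over $\F_q$ vanishes on the random sign vector is $q^{-1}+O(\text{bias})$, where the bias is $\prod_\alpha|\cos(\cdot)|$ over the coefficients involved. For points of small degree this gives probability $q^{-(n+1)e}(1+o(1))$, because the functionals $f\mapsto \partial_i f(P)$ involve many monomials with nonvanishing weights. The sum over small-degree points is then $\sum_e q^{ne-(n+1)e}$, which is $O(1)$ and not small. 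This is the familiar Poonen density $\zeta(n+1)^{-1}$-type loss, so a fixed prime gives only a positive proportion.

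\textbf{Step 2: letting the prime grow.} To obtain a bound tending to zero we avoid reduction modulo a fixed prime. We work over $\C$ directly and use anticoncentration. Write $f=\eps_{\alpha_0}x_0^d+g$ and condition on $g$. The discriminant is a polynomial in the single variable $\eps_{\alpha_0}$ that is not identically zero, since its leading behaviour is controlled by $\operatorname{Disc}(x_0^d+\dots)$, but it takes only two values. The better route is Littlewood--Offord/Halász applied to the degree-$\asymp d$ polynomial $\operatorname{Disc}$ in the signs, via the Meka--Nguyen--Vu polynomial anticoncentration. This gives $\Prob(\operatorname{Disc}(f)=0)\ll (\log r)^{O(1)}r^{-1/2}$, where $r$ is the size of a maximal disjoint family of monomials of top degree in $\operatorname{Disc}$. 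Since $\operatorname{Disc}$ is multilinear-reducible over signs and contains $\gg d$ disjoint maximal-degree monomials (for example from the Fermat-type terms $\prod_i$ of diagonal coefficients), we get roughly $d^{-1/2}$.

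\textbf{Main obstacle.} Removing the logarithmic factor is the hardest step, and Step 2 is where I expect the difficulty. The plan to address it is to apply a quadratic-in-one-direction argument. Restrict to the pencil $f_t=g+t\,h$, where $h$ is a sum of $\asymp d$ monomials with i.i.d.\ signs $\eps_1,\dots,\eps_m$. Singular members of a Lefschetz-type family are finite in number, at most $\deg\operatorname{Disc}\ll d^{n}$. The condition that $\sum\eps_i c_i$ lies in that finite set is handled by the Erd\H{o}s--Littlewood--Offord bound $O(m^{-1/2})$ per target. Making this work needs the targets to be few, which requires a more careful choice of a one-parameter linear family.
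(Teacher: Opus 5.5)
Your proposal is not a proof. You abandon the route that works because you misjudge a sum, and the route you adopt instead cannot reach $d^{-1/2}$. (Your opening reduction modulo $p$ is fine, provided the discriminant is normalized so that its reduction detects singularity in every characteristic, including $p\mid d$. The paper avoids this issue with Lemma~\ref{lem:specialization}.)

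In Step~1 the low-degree sum is not just $O(1)$. There are at most about $p^{ne}/e$ closed points of degree $e$ in $\PP^n_{\F_p}$. Point probabilities of size $p^{-(n+1)e}$ therefore sum to about $\sum_e p^{-e}/e\approx 1/p$; indeed Poonen's density $\prod_{j=1}^{n+1}(1-p^{-j})$ equals $1-O(1/p)$. A single prime loses only $O(1/p)$, so the natural move is to let $p$ grow with $d$. The paper does this with $p\asymp\sqrt d$. (Your Step~2 is titled ``letting the prime grow'' but then drops reduction altogether.) Even on this route, two ingredients are missing from your sketch.

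\emph{Sign bias.} Signs are far from uniform modulo $p$, so $p^{-(n+1)e}(1+o(1))$ is not available. The paper proves instead $\Prob(\text{singular at }P)\leq\Th_K(p)^{(n+1)e}$ with $\Th_K(p)\leq 1/p+1/\sqrt K$. This comes from a Fourier--H\"older argument over $K\asymp d/((n+1)e)$ bases of $\F_p[y]/\m_P^2$ with pairwise disjoint monomial supports. The bases are obtained by multiplying one monomial basis by powers $y_1^{hL}$ of a unit coordinate (Lemmas~\ref{lem:fourier} and~\ref{lem:packing}). The $e=1$ term $p^n(1/p+\sqrt{(n+1)/d})^{n+1}\asymp d^{-1/2}$ is exactly where the exponent comes from.

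\emph{Large-degree points.} You never treat closed points of large degree, where Poonen's sieve relies on adding uniformly random polynomials. The paper substitutes the sign-preserving splitting $g=U+\sum_i y_iG_i^p+H^p$ according to exponents modulo $p$. Revealing $G_1,\dots,G_n$ in turn, with affine B\'ezout and the independent-columns bound $2^{-(s+1)}$, makes the critical locus finite with at most $(d-1)^n$ points. The untouched block $H$ then places a critical point of degree $>R\asymp\log d$ on the hypersurface with probability at most $2^{-\min(t+1,R+1)}$.

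Step~2 fails as stated. Polynomial Littlewood--Offord bounds of Meka--Nguyen--Vu type have constants that grow rapidly with the degree of the polynomial, and they are vacuous unless that degree is very small compared with the rank. Here $\deg\operatorname{Disc}=(n+1)(d-1)^n$ is at least the number $N_{d,n}$ of variables, which bounds every rank. So nothing like $(\log r)^{O(1)}r^{-1/2}$ follows.

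Your rank claim is also unsupported. There is only one diagonal monomial, the product of the $(d-1)^n$th powers of the coefficients of $x_0^d,\dots,x_n^d$. After reducing with $\eps^2=1$ it has degree at most $n+1$.

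The pencil repair does not work either. A sum $\sum_i\eps_ic_im_i$ with independent signs is not a one-parameter family $g+th$. Even granting one, a union bound over up to $(n+1)(d-1)^n$ targets, each of probability $O(m^{-1/2})$ with $m\leq N_{d,n}$, gives at best $O(d^{n/2})$.
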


The following corollary immediately confirms the conjecture.

\begin{corollary}[The Browning--Sawin conjecture]\label{cor:BS}
The conjecture above holds. More precisely, for every fixed integer $n\geq2$, the proportion
of smooth forms satisfies
\[
 \frac{\#\{f\in\mathcal B_{d,n}:X_f\text{ is smooth}\}}
      {2^{N_{d,n}}}=1+O_n(d^{-1/2})
 \qquad(d\to\infty).
\]
\end{corollary}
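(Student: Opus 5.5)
The corollary follows directly from Theorem~\ref{thm:main}; no new input is needed. The plan is to rewrite the proportion in the corollary as a probability, bound it on both sides using \eqref{eq:main}, and then check that the identification of $f$ with $-f$ made by Browning and Sawin does not change the quantity.

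First, the model \eqref{eq:model} gives the uniform distribution on the $2^{N_{d,n}}$ elements of $\mathcal B_{d,n}$. Hence
\[
 \frac{\#\{f\in\mathcal B_{d,n}:X_f\text{ is smooth}\}}{2^{N_{d,n}}}
 =\Prob(X_f\text{ is smooth})
 =1-\Prob(X_f\text{ is singular}).
\]
Second, fix $n\geq2$ and apply Theorem~\ref{thm:main}, which holds for every $n\geq1$ and in particular for this $n$. For every $d\geq3$ it gives
\[
 1-C_nd^{-1/2}\leq \Prob(X_f\text{ is smooth})\leq 1 .
\]
This is exactly the statement that the proportion equals $1+O_n(d^{-1/2})$ as $d\to\infty$. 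Letting $d\to\infty$, the right-hand side of the lower bound tends to $1$, so the limit in the conjecture equals $1$.

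Finally, we reconcile the normalization. Browning and Sawin count forms up to sign, whereas we count both $f$ and $-f$. The map $f\mapsto -f$ is a fixed-point-free involution of $\mathcal B_{d,n}$, since every coefficient is $\pm1$ and hence nonzero. It also preserves smoothness, because $X_f=X_{-f}$ as schemes. Each orbit therefore has exactly two elements, and either both are smooth or neither is. So the smooth proportion among the $2^{N_{d,n}-1}$ classes equals the smooth proportion among all $2^{N_{d,n}}$ forms, and the conjecture in its original normalization follows.

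No step here is a genuine obstacle. The entire difficulty lies in Theorem~\ref{thm:main} itself, which we are allowed to assume.
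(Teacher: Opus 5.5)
Your proposal is correct and follows the paper: the corollary is read off directly from Theorem~\ref{thm:main}, together with the remark that identifying $f$ with $-f$ does not change the smoothness probability. Your check that $f\mapsto -f$ is a fixed-point-free involution of $\mathcal B_{d,n}$ preserving $X_f$ simply spells out that remark in detail.
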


\begin{remark}
The assumption that the coefficients are nonzero is important. Suppose instead that each
coefficient is chosen independently from $0$ and $1$, with equal probability. If the
coefficients of $x_0^d$ and $x_0^{d-1}x_i$, for $1\leq i\leq n$, are all zero, then the
polynomial and all its affine first derivatives vanish at $[1:0:\cdots:0]$. Requiring the
coefficient of $x_1^d$ to be $1$ also ensures that the polynomial is nonzero. For $d\geq2$,
these are $n+2$ distinct coefficient conditions. The event that they all hold has probability
$2^{-(n+2)}$, independently of $d$, and gives a singular hypersurface. Therefore, allowing zero
coefficients can prevent the smoothness probability from tending to one. 
\end{remark}

\begin{remark}
In dimension one, the case of Littlewood polynomials (i.e. the coefficients are in $\{ \pm 1\}$),  smoothness means no
repeated roots. Peled, Sen, and
Zeitouni \cite[Theorems~1.1 and~1.2]{PSZ} give the sharper bound $O(d^{-2})$ in this case.
\end{remark}

Our second result gives a stronger bound for positive-dimensional singular loci than for
isolated singular points. Let $\Sing X_f$ denote the closed set of singular points, with
the convention $\dim\varnothing\coloneqq-1$.

\begin{theorem}\label{thm:positive}
For all integers $n\geq1$ and $d\geq3,$ we have
\begin{equation}\label{eq:positive}
 \Prob(\dim\Sing X_f\geq1)
 \leq \frac{n(n+1)d^{n-1}}{2^{\lfloor(d-1)/3\rfloor+1}}.
\end{equation}
\end{theorem}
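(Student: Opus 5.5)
The plan is a union bound over candidate singular points, each of which must then satisfy one further derivative equation whose coefficients are still random. The factor $n(n+1)$ will count ordered pairs $(i,j)$ of distinct coordinates. The factor $d^{n-1}$ will count candidate points via B\'ezout. The exponential factor will come from an anti-concentration estimate for one linear equation in many independent signs.

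\emph{Step 1: reduction to a zero-dimensional configuration.} Suppose $\dim\Sing X_f\geq1$ and let $C\subset\Sing X_f$ be an irreducible curve. Since $C$ is projective and positive-dimensional, it meets every hyperplane; in particular it meets $H_i=\{x_i=0\}$ for each $i$. I want to choose a pair $(i,j)$ and a point $p\in C\cap H_i$ with $p_j\neq0$ that is forced into a set $S_{i,j}$ of at most $d^{n-1}$ points. I also want $S_{i,j}$ to be determined by the coefficients $\eps_\alpha$ with $\alpha_i\neq1$. The natural candidate for $S_{i,j}$ is the common zero set, on $H_i\cong\PP^{n-1}$, of $n-1$ suitable partial derivatives of $f|_{H_i}$. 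These are forms of degree $d-1$ involving only monomials with $\alpha_i=0$, so B\'ezout gives at most $(d-1)^{n-1}\leq d^{n-1}$ points whenever the intersection is finite.

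The case where this intersection is not finite has to be absorbed. I would first try an induction on $n$: the restriction $f|_{H_i}$ is again a random sign form in $n$ variables, and an infinite intersection there is a positive-dimensional degeneracy of that smaller form, whose probability is already exponentially small by the inductive hypothesis. If that does not work directly, I would replace the derivatives by a generic real linear combination chosen independently of the coefficients. Among the $n+1$ coordinates, at least one $j$ has $p_j\neq0$, and I normalize $p_j=1$.

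\emph{Step 2: the extra equation.} Every point of $\Sing X_f$ satisfies $\partial f/\partial x_i=0$. At a point $p$ with $p_i=0$ this equation reads
\[
\sum_{\alpha_i=1}\eps_\alpha\, p^{\alpha-e_i}=0 .
\]
It involves only the coefficients with $\alpha_i=1$, which are independent of those defining $S_{i,j}$. Conditioning on the latter fixes $S_{i,j}$. A union bound over the at most $d^{n-1}$ points of $S_{i,j}$ and the $n(n+1)$ pairs $(i,j)$ then reduces \eqref{eq:positive} to the following claim: for a fixed point $p$ with $p_i=0$ and $p_j=1$, the probability that this linear form in the free signs vanishes is at most $2^{-(\lfloor(d-1)/3\rfloor+1)}$.

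\emph{Step 3: anti-concentration, the main obstacle.} I would isolate the monomials $x_ix_j^{a}x_k^{d-1-a}$ for a fixed third index $k$ (or $x_ix_j^{d-1}$ alone when $n=1$). This leaves a one-variable Littlewood-type polynomial $\sum_a\eps_a t^{a}$ with $t=p_k$ (after normalization), plus an independent remainder. Conditional on the remainder, I need to bound the probability that this random sign polynomial takes a prescribed value at a fixed algebraic $t$.

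I would split into cases. If $t=0$, a single coefficient is decisive and the bound is $1/2$, which suffices for small $d$. Otherwise I would group the exponents into about $(d-1)/3$ disjoint blocks of three consecutive terms. In each block, flipping one sign shifts the value by a nonzero amount that cannot be compensated within that block. An Erd\H{o}s/Odlyzko-style injection argument then shows that at most a $2^{-\#\text{blocks}}$ fraction of sign vectors hit any given value. Making this block/injection argument rigorous for arbitrary complex $t$, including roots of unity where cancellations are most likely, is where I expect the real difficulty. This is also where the specific exponent $\lfloor(d-1)/3\rfloor+1$ should come out.
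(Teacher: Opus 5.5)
\textbf{There is a genuine gap.} The pointwise claim you reduce to in Step~3 is false, so no block or injection argument can prove it. For $n\geq2$, pick a third index $k\notin\{i,j\}$. Let $q$ be the point with $q_i=0$, $q_j=1$, $q_k=-1$ and all other coordinates $0$; I write $q$ to avoid a clash with the prime $p$. Then
\[
 \sum_{\alpha_i=1}\eps_\alpha q^{\alpha-e_i}
 =\sum_{a=0}^{d-1}(-1)^{d-1-a}\,\eps_{e_i+ae_j+(d-1-a)e_k}
\]
is a sum of $d$ independent uniform signs. For even $d$ it vanishes with probability $\binom{d}{d/2}2^{-d}\geq(2d)^{-1/2}$, which exceeds $2^{-(\lfloor(d-1)/3\rfloor+1)}$ for every even $d\geq4$.

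The tools you name cannot give more. Rank arguments such as Lemma~\ref{lem:rank} or Odlyzko's subspace count produce $2^{-r}$ only from $r$ linearly independent directions. A single evaluation at an algebraic point $q$ has rank at most $[\Q(q):\Q]$ over $\Q$. Erd\H{o}s's Littlewood--Offord argument gives only $\binom{M}{\lfloor M/2\rfloor}2^{-M}\asymp M^{-1/2}$. Nothing in your setup keeps such points out of $S_{i,j}$: if the singular curve were a line defined over $\Q$, all of its points on $H_i$ would be rational.

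\textbf{The root cause is Step~1.} It uses the curve $C$ only to produce a singular point on $H_i$. Isolating the block $\alpha_i=1$ on $H_i$ does give independence, but it only sees the finite set $C\cap H_i$, so the positive-dimensionality is thrown away.

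The paper instead keeps a positive-dimensional component and asks a derivative to vanish identically on it, which supplies many independent directions. The steps are:
\begin{enumerate}
\item Reduce modulo $3$ by Lemma~\ref{lem:specialization}. On an affine chart the dehomogenized form splits as $g=U+\sum_iy_iG_i^3+H^3$, so $\del_ig=\del_iU+G_i^3$ carries its own independent block $G_i$.
\item Reveal $G_1,\ldots,G_n$ in turn. If $\dim W_i$ fails to drop, then $\del_ig$ vanishes on a positive-dimensional component $V$ of $W_{i-1}$. There are at most $D^{i-1}$ such components by Lemma~\ref{lem:bezout}.
\item Injectivity of Frobenius pins down $G_i|_V$. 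Some coordinate $y_j$ is transcendental on $V$, so $1,y_j,\ldots,y_j^s$ are linearly independent there. Lemma~\ref{lem:rank} then gives $2^{-(s+1)}$ with $s=\lfloor(d-1)/3\rfloor$.
\end{enumerate}
Thus the $3$ in the exponent is the prime, not a block length. The factor $n(n+1)d^{n-1}$ is $\sum_{i<n}D^i\leq nd^{n-1}$ times the $n+1$ charts, not a count of pairs $(i,j)$.

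Your proposed induction in Step~1 is also problematic. The inductive hypothesis controls the common zeros of all $n$ partials of $f|_{H_i}$, not of $n-1$ of them. Its error terms would also spoil the stated constant.
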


A polynomial is \emph{absolutely irreducible} if it is irreducible over an algebraic closure of
its coefficient field. For our integral forms, this is equivalent to irreducibility over
$\C$. In dimension at least three, a reducible hypersurface has a positive-dimensional
singular locus. Therefore, the preceding theorem gives the following corollary.

\begin{corollary}\label{cor:absolute}
For $n\geq3$, we have that
$$
\Prob( f \ {\text {is not absolutely irreducible}})\leq 
\frac{n(n+1)d^{n-1}}{2^{\lfloor(d-1)/3\rfloor+1}}.
$$
\end{corollary}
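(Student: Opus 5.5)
The corollary should follow directly from Theorem~\ref{thm:positive} once we show that, for $n\geq3$, failure of absolute irreducibility forces $\dim\Sing X_f\geq1$. The plan is to prove that geometric implication and then take probabilities: the event ``$f$ is not absolutely irreducible'' will sit inside the event ``$\dim\Sing X_f\geq1$'', so its probability is bounded by the right-hand side of \eqref{eq:positive}.

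\textbf{Geometric step.} Suppose $f$ is not irreducible over $\C$. Since every coefficient of $f$ is $\pm1$, we have $f\neq0$, so $X_f$ is a genuine hypersurface of degree $d$. Write $f=gh$ with $g,h\in\C[x_0,\dots,x_n]$ homogeneous of positive degrees; factors of a homogeneous polynomial are automatically homogeneous. At any point $p$ with $g(p)=h(p)=0$, the product rule gives
\[
\nabla f(p)=g(p)\nabla h(p)+h(p)\nabla g(p)=0,
\]
so $p\in\Sing X_f$. This covers the case of a repeated factor $g=h$, where it says $V(g)\subset\Sing X_f$, consistent with our retention of the scheme structure. Hence $V(g)\cap V(h)\subset\Sing X_f$.

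\textbf{Dimension count.} By the projective dimension theorem, the intersection of two hypersurfaces in $\PP^n_\C$ is nonempty, and every component has dimension at least $n-2$. For $n\geq3$ this is at least $1$, so $\dim\Sing X_f\geq1$.

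\textbf{Conclusion.} Combining the two steps, the event that $f$ is not absolutely irreducible is contained in the event $\{\dim\Sing X_f\geq1\}$. Applying \eqref{eq:positive} then gives the stated bound. Absolute irreducibility over $\overline{\Q}$ agrees with irreducibility over $\C$ (as noted in the introduction), so no further care is needed.

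The only point needing care is the factorization step: we must use that $f$ is a nonzero form, which holds because every coefficient is $\pm1$. The argument fails for $n=2$, since two curves in $\PP^2$ can meet in finitely many points; this is why the corollary requires $n\geq3$.
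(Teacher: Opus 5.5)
Your proposal is correct and follows essentially the same route as the paper: factor $f$ into homogeneous pieces, use the product rule to place $V(g)\cap V(h)$ inside $\Sing X_f$ (including the repeated-factor case), bound its dimension below by $n-2\geq1$, and invoke Theorem~\ref{thm:positive}. The only cosmetic difference is that the paper proves homogeneity of the factors explicitly and obtains the dimension bound from the principal ideal theorem on the affine cone, whereas you cite the projective dimension theorem directly.
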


\noindent
Indeed, two hypersurface factors in $\PP^n$ meet in dimension at least $n-2$, and every point
of their intersection is singular for the product. Section~\ref{sec:positive} gives the
argument, including the case of repeated factors.

\begin{remark}
For coefficients uniformly distributed over a fixed finite field, Poonen proves the
corresponding density-zero statement for positive-dimensional singular loci and deduces
irreducibility in the same way (see \cite[Theorem~3.2]{Poo} and the remark following it).
Our estimates apply to sign coefficients and are uniform in the prime by which we reduce.
\end{remark}

\begin{remark}
Irreducibility over $\Q$ is already known for this model with probability tending to one.
Browning and Sawin observe (see the discussion preceding \cite[Conjecture~1.3]{BS}) that the
work of Bary-Soroker and Kozma \cite{BSK} gives asymptotic irreducibility of ternary sign
forms over $\Q$, and restricting to three variables extends this to every fixed $n\geq2$.
Corollary~\ref{cor:absolute} concerns irreducibility over $\C$, with an exponential bound, and
is obtained from the singular locus rather than from an irreducibility theorem for random
polynomials. In a related direction, Kozma and Zeitouni \cite[Theorem~1]{KZ} study common
zeros of independent Bernoulli polynomials.
\end{remark}

\subsection*{Strategy of proof}
Our starting point is Poonen's proof of the Bertini theorem over finite fields
\cite[Section~2]{Poo}. He groups the possible singular points by their residue-field degree,
treats points of small degree by a local estimate, and treats points of large degree by a
sieve that uses Frobenius powers to impose the derivative equations one at a time. Both steps
rely on the coefficients being uniformly distributed over the finite field, a property that is
preserved when random polynomials of controlled shape are added to $f$. The sign distribution
has no such invariance. We therefore work only with the coefficients in
\eqref{eq:model}. We partition them in a way that supplies the independence that Poonen obtains from
randomization, and then we apply a concentration estimate for sign sums in place of local uniformity.

The reduction to a finite field is Lemma~\ref{lem:specialization},  a sign form that is
singular over $\C$ remains singular modulo every prime, and the dimension of its singular
locus cannot decrease. Fix an odd prime $p$. On a standard affine chart, the dehomogenized
form carries an independent sign on every monomial of total degree at most $d$, and we
estimate its singularity probability separately for closed points of small and of large
degree.

For a closed point $P$ of degree $e$, singularity at $P$ is the vanishing of the image of
the polynomial in the first-order algebra $$\F_p[y_1,\ldots,y_n]/\m_P^2,
$$
an $\F_p$-vector
space of dimension $(n+1)e$. Away from the coordinate origin some coordinate is a unit in
this algebra, and multiplying a monomial basis by its powers yields many bases with pairwise
disjoint monomial supports. A Fourier--H\"older argument of He, Pham, and Xu
(see the proof of Proposition 3.3 of \cite{HPX}) stated for arbitrary disjoint bases in
Lemma~\ref{lem:fourier}, then bounds the probability of any prescribed image by a quantity
that decays with the number of bases. Summing over the closed points of $\PP^n_{\F_p}$ of
degree at most a cutoff $R$ gives the low-degree contribution.

For points of degree exceeding $R$, there are too many candidates for this union bound.
Sorting the exponent vectors by their residues modulo $p$ decomposes the dehomogenized form
as 
$$U+\sum_{i=1}^n y_iG_i^p+H^p,
$$
where $U,G_1,\ldots,G_n,H$ use disjoint sets of the original
signs and are therefore independent. The $i$th partial derivative involves only $U$ and
$G_i$, and $H$ appears in no derivative. Revealing $G_1,\ldots,G_n$ in turn and applying an
affine B\'ezout bound, we show that with high probability each derivative equation lowers
the dimension of the critical locus until it is finite. The block $H$ then decides whether a
critical point lies on the hypersurface, and a point of large degree provides many
independent evaluation directions for $H$, which makes this probability exponentially small.

Theorem~\ref{thm:finite} combines the two estimates for an arbitrary odd prime and cutoff.
Taking $p$ of order $\sqrt d$ and $R$ of order $\log d$, the points of degree one contribute
$O_n(d^{-1/2})$ and every other term is smaller. This proves Theorem~\ref{thm:main}. A
positive-dimensional singular locus forces a positive-dimensional critical locus on some
chart, so Theorem~\ref{thm:positive} needs only the high-degree sieve, applied at the fixed
prime $p=3$.

The paper is organized as follows. Section~\ref{sec:specialization} carries out the
reduction modulo a prime. Section~\ref{sec:concentration} proves the concentration estimates
for sign sums, and Sections~\ref{sec:local} and~\ref{sec:high} treat closed points of small
and large degree. Section~\ref{sec:finite} combines these into a finite-field estimate, and
Section~\ref{sec:proofs} deduces the main results.

\section*{Acknowledgements} \noindent The authors thank Tim Browning for bringing this problem to their attention, as well as his enthusiastic support for this project.

\section{Singularities and reduction modulo a prime}\label{sec:specialization}

To reduce the problem to a finite field, we use the equation of the hypersurface and its
first derivatives to describe the singular locus. We then show that the dimension of a
projective singular locus cannot decrease under reduction.

To make this all precise, we begin by setting notation. For a field $k$, write $\bar k$ for an algebraic closure and $\A^n_k$ for affine
$n$-space. Given polynomials $h_1,\ldots,h_j$ over $k$, define their common geometric zero set by
\[
 Z(h_1,\ldots,h_j)\coloneqq
 \{a\in\A^n_{\bar k}:h_1(a)=\cdots=h_j(a)=0\}.
\] All dimensions and irreducible components below are
geometric. The singular locus is considered with its reduced structure whenever only its
dimension is used.

\subsection{The affine singularity criterion}

We first recall how the linear part of a local equation determines smoothness. We then pass
to homogeneous equations, including in characteristics that divide the degree.

For a polynomial $g$ in affine coordinates $y_1,\ldots,y_n$, write $\del_i g\coloneqq\partial g/\partial
y_i$. Suppose that $g$ is nonzero, and let $a$ be a point of $\A^n_{\bar k}$ at which $g$ vanishes.
We introduce local coordinates
$z\coloneqq(z_1,\ldots,z_n)$ centered at $a$. The first-order expansion is
\[
 g(a+z)\equiv \sum_{i=1}^n(\del_i g)(a)z_i
       \pmod{(z_1,\ldots,z_n)^2}.
\]
The local hypersurface has dimension $n-1$. The displayed linear form cuts out its tangent
space, which has dimension $n-1$ when the form is nonzero and dimension $n$ otherwise.
Therefore, the regular-local-ring criterion gives
\begin{equation}\label{eq:jacobian}
 a\text{ is singular on }g=0
 \quad\Longleftrightarrow\quad
 g(a)=\del_1g(a)=\cdots=\del_ng(a)=0.
\end{equation}
Over a perfect field, geometric regularity agrees with smoothness. This is the hypersurface
Jacobian criterion (see \cite[Chapter~I, \S5; Chapter~II, \S8]{Hartshorne}).

For a homogeneous polynomial $F$ of positive degree $d$, use $\del_iF\coloneqq\partial F/\partial x_i$
in homogeneous coordinates. On the chart $x_0\ne0$, put $y_i\coloneqq x_i/x_0$ and $g(y)\coloneqq F(1,y)$. Direct
differentiation and homogeneity give
\begin{equation}\label{eq:homogeneous-derivatives}
 \del_iF(1,y)=\del_i g(y)\quad(i\geq1),\qquad
 \del_0F(1,y)=dg(y)-\sum_{i=1}^n y_i\del_i g(y).
\end{equation}
Therefore, the homogeneous equations $F=\del_0F=\cdots=\del_nF=0$ define the projective singular locus
in every characteristic. In particular, the equation $F=0$ is retained when the characteristic
divides $d$.

\subsection{Specialization of projective equations}
We need singularities to persist after reduction modulo a prime. Projectivity gives this
implication and preserves a lower bound for the dimension of the singular locus. No lifting
from a finite field is required.

An integral polynomial is \emph{primitive} if its coefficients have greatest common divisor
$1$. Such a polynomial has nonzero reduction modulo every prime. The following lemma is a
special case of upper semicontinuity of fiber dimension for proper morphisms
\cite[Tag~0D4I]{Stacks}. For completeness, we give a proof using graded coordinate rings.

\begin{lemma}\label{lem:specialization}
Let $I\subset\Z[x_0,\ldots,x_n]$ be a homogeneous ideal. For a field $k$, let $S_k$ be the
projective scheme defined by its image in $k[x_0,\ldots,x_n]$. For every prime $p$, the fiber
dimensions satisfy
\begin{equation}\label{eq:dimension-specialization}
 \dim S_{\overline{\F}_p}\geq\dim S_\C.
\end{equation}
Consequently, if $F$ is a primitive integral homogeneous polynomial of positive degree and $\bar
F$ is its reduction modulo $p$, then the singular loci satisfy
\begin{equation}\label{eq:sing-specialization}
 \dim\Sing X_{\bar F}\geq\dim\Sing X_F,
\end{equation}
where the two hypersurfaces are taken over $\overline{\F}_p$ and $\C$, respectively.
\end{lemma}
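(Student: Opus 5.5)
Our plan is to prove \eqref{eq:dimension-specialization} through Hilbert functions of graded coordinate rings, and then obtain \eqref{eq:sing-specialization} by applying it to the Jacobian ideal. Put $A\coloneqq\Z[x_0,\ldots,x_n]/I$, a graded ring whose homogeneous pieces $A_m$ are finitely generated abelian groups. For any field $k$, the coordinate ring of $S_k$ is $A\otimes_\Z k$, and its degree $m$ piece is $A_m\otimes k$. Recall that $\dim S_k$ is one less than the degree of the Hilbert polynomial of $A\otimes k$, and that the Hilbert polynomial of the zero ring is $0$, which is consistent with $\dim\varnothing=-1$. The dimension is geometric and does not change under field extension, so we may take $k=\Q$ in place of $\C$ and $k=\F_p$ in place of $\overline{\F}_p$.

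The key comparison is
\[
\dim_{\F_p}(A_m\otimes\F_p)\geq\dim_\Q(A_m\otimes\Q)\qquad\text{for every }m .
\]
To see it, write $A_m\cong\Z^{r}\oplus T$ with $T$ finite. Then $\dim_\Q(A_m\otimes\Q)=r$, while $\dim_{\F_p}(A_m\otimes\F_p)=r+\dim_{\F_p}(T/pT)\geq r$. It follows that the Hilbert function of $A\otimes\F_p$ dominates that of $A\otimes\Q$ for all $m$. If two polynomials satisfy $h_1(m)\geq h_2(m)$ for all large $m$, and $h_2$ has positive leading coefficient, then $\deg h_1\geq\deg h_2$. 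This gives \eqref{eq:dimension-specialization}. If $S_\Q$ is empty there is nothing to prove.

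For the consequence, let $J\subset\Z[x_0,\ldots,x_n]$ be the ideal generated by $F,\del_0F,\ldots,\del_nF$. Partial differentiation commutes with reduction modulo $p$, so the image of $J$ in $k[x]$ is generated by $\bar F$ and its partials. By \eqref{eq:homogeneous-derivatives} and \eqref{eq:jacobian}, applied on each standard chart, the resulting projective scheme has underlying set $\Sing X_{F}$ over $\C$ and $\Sing X_{\bar F}$ over $\overline{\F}_p$. Only dimensions enter, so the nonreduced structure is harmless.

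We need the hypothesis that $F$ is primitive so that $\bar F\neq0$ and $X_{\bar F}$ is a genuine hypersurface, to which \eqref{eq:jacobian} applies. In particular, $F=0$ remains among the equations even when $p\mid d$. Applying \eqref{eq:dimension-specialization} to $J$ then yields \eqref{eq:sing-specialization}.

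We expect the main obstacle to be bookkeeping rather than substance. The first point is to confirm that the projective dimension is read off from the Hilbert polynomial of $A\otimes k$ even when the ideal is not saturated; this holds because saturation changes only finitely many graded pieces. The second point is to check that the singular locus is cut out by exactly these equations in every characteristic, including when $p$ divides $d$. Both follow from the standard facts already recalled in Section~\ref{sec:specialization}.
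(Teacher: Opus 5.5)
Your proposal is correct and follows the paper's proof essentially step for step: you compare $\dim_{\F_p}(A_m\otimes\F_p)\geq\dim_\Q(A_m\otimes\Q)$ on the finitely generated graded pieces, read dimensions off Hilbert polynomials, and then apply this to $(F,\del_0F,\ldots,\del_nF)$, with primitivity keeping $\bar F\neq0$. One small slip is that $\dim S_k$ equals the degree of the Hilbert polynomial rather than being one less than it (for $\PP^n$ the Hilbert polynomial is $\binom{m+n}{n}$), but since you only compare degrees on both sides, this does not affect the argument.
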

\begin{proof}
Put $B\coloneqq\Z[x_0,\ldots,x_n]/I$, and write $B_m$ for its homogeneous part of degree $m$. Each $B_m$
is a finitely generated abelian group. Its dimension after tensoring with $\Q$ is its rank,
whereas its dimension after tensoring with $\F_p$ is at least that rank. Hence
\[
 \dim_{\F_p}(B_m\otimes\F_p)
 \geq\dim_\Q(B_m\otimes\Q)\qquad(m\geq0).
\]
These dimensions are the Hilbert functions of the graded coordinate rings. For sufficiently
large $m$, each agrees with its Hilbert polynomial. A nonempty projective scheme of dimension
$r$ has a Hilbert polynomial of degree $r$ with positive leading coefficient. The Hilbert
polynomial of an empty projective scheme is zero. The inequality therefore shows that the
special fiber cannot have smaller dimension than the generic fiber. Extending the ground
field preserves both the Hilbert function and dimension, which proves
\eqref{eq:dimension-specialization}. These standard facts about Hilbert polynomials are
recalled in \cite[Chapter~I, \S7]{Hartshorne}.

Apply the first assertion to $I\coloneqq(F,\del_0F,\ldots,\del_nF)$. Primitivity ensures that each
reduction remains a nonzero hypersurface equation. Equations~\eqref{eq:jacobian}
and~\eqref{eq:homogeneous-derivatives} identify the support of each projective zero scheme with
the corresponding singular locus. This proves \eqref{eq:sing-specialization}.
\end{proof}

Every sign form is primitive. From now on, for a prime $p$, write $X_{\bar
f}\subset\PP^n_{\overline{\F}_p}$ for the reduced-coefficient hypersurface, without discarding
its scheme structure. Lemma~\ref{lem:specialization} gives
\begin{equation}\label{eq:prob-specialization}
 \Prob(X_f\text{ is singular})
 \leq\Prob(X_{\bar f}\text{ is singular}).
\end{equation}
{\color{black}
The converse need not hold, since a smooth complex hypersurface can become singular after
reduction. We do not use that implication.
}
\section{Concentration of sign sums}\label{sec:concentration}

We require two concentration estimates. The first uses linear independence to give a bound
that decreases exponentially with the number of independent directions. The second applies
to a finite target space and uses disjoint bases to obtain a stronger bound that also depends
on the size of the field.

For the first estimate, fix the coefficients outside a linearly independent set of columns.
A prescribed sum then determines at most one assignment of signs to those columns.

\begin{lemma}\label{lem:rank}
Let $k$ be a field of characteristic different from $2$, and let $v_1,\ldots,v_M$ belong to a
$k$-vector space. Suppose that $r$ of these vectors are linearly independent over $k$. For
independent uniform signs $\eps_1,\ldots,\eps_M$ and any vector $u$, we have
\[
 \Prob\left(\sum_{j=1}^M\eps_jv_j=u\right)\leq2^{-r}.
\]
\end{lemma}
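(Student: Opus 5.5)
Choose indices $j_1,\ldots,j_r$ with $v_{j_1},\ldots,v_{j_r}$ linearly independent, and let $J$ be the remaining indices. The plan is to condition on the signs $\eps_j$ for $j\in J$ and show that the conditional probability is at most $2^{-r}$ for every fixed choice of those signs. The unconditional bound then follows by averaging over the $2^{M-r}$ equally likely assignments on $J$, since the signs are independent.

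Fix the signs off the chosen columns, and set $u'\coloneqq u-\sum_{j\in J}\eps_jv_j$. The event becomes $\sum_{s=1}^r\eps_{j_s}v_{j_s}=u'$. By linear independence there is at most one tuple $(c_1,\ldots,c_r)\in k^r$ with $\sum c_sv_{j_s}=u'$. Hence at most one sign vector $(\eps_{j_1},\ldots,\eps_{j_r})\in\{\pm1\}^r$ can satisfy the equation. This uses that the map $\{\pm1\}^r\to k^r$ is injective, which holds because $1\neq-1$ in $k$ when $\operatorname{char}k\neq2$. The conditional probability is therefore at most $2^{-r}$.

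There is no real obstacle here. The only point needing care is the injectivity of signs into $k$, which is exactly where the hypothesis on the characteristic enters. Over characteristic $2$ the two signs coincide and the bound fails.
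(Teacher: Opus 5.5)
Your proof is correct and is essentially the paper's argument: condition on the signs outside the chosen independent columns, and observe that at most one assignment of the remaining $r$ signs can produce the prescribed sum. The paper argues the uniqueness step slightly differently, noting that two distinct assignments would give a nontrivial linear relation with coefficients in $\{0,2,-2\}$; this uses $\operatorname{char}k\neq2$ in the same way as your injectivity of $\{\pm1\}\to k$.
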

\begin{remark}
The conclusion also applies when the ambient space is viewed as a vector space over a subfield
of $k$ and the chosen vectors are independent over that subfield.
\end{remark}

\begin{proof}[Proof of Lemma~\ref{lem:rank}]
Condition on every coefficient except those attached to the chosen independent vectors. At most
one assignment of the remaining signs can give the prescribed sum. Indeed, two different
assignments would give a nontrivial linear relation with coefficients in $\{0,2,-2\}$. Each
assignment has probability $2^{-r}$.
\end{proof}

For an odd prime $p$ and an integer $K\geq1$, define the average below, where each residue
$a\in\F_p$ may be represented by any integer:

\begin{equation}\label{eq:theta-def}
 \Th_K(p)\coloneqq\frac1p\sum_{a\in\F_p}\left|\cos\frac{2\pi a}{p}\right|^K.
\end{equation}
For fixed $p$, this average is nonincreasing in $K$, since every absolute cosine lies in $[0,1]$.

The following lemma gives the Fourier--H\"older argument of He, Pham, and Xu
\cite[proof of Proposition~3.3]{HPX} for arbitrary disjoint bases. Their bases come from
consecutive univariate monomials. We use uniform signs and allow any prescribed image, as
needed for the first-order algebras in Section~\ref{sec:local}. Independence is required only
for the original coefficients.

\begin{lemma}\label{lem:fourier}
Let $L,M,K$ be positive integers, and let $v_1,\ldots,v_M$ be vectors in $\F_p^L$.
Let $\eps_1,\ldots,\eps_M$ be
independent uniform signs. Suppose that the vectors contain $K$ pairwise disjoint groups of
indices, each giving a basis of $\F_p^L$. Then the largest point probability satisfies
\begin{equation}\label{eq:fourier}
 \sup_{u\in\F_p^L}\Prob\left(\sum_{j=1}^M\eps_jv_j=u\right)
 \leq\Th_K(p)^L.
\end{equation}
\end{lemma}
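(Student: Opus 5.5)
The plan is the standard Fourier inversion on $\F_p^L$, followed by H\"older's inequality across the $K$ disjoint groups and a change of variables inside each group.

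\textbf{Step 1: Fourier inversion.} Write $e(t)\coloneqq e^{2\pi i t/p}$ for $t\in\F_p$, and let $S\coloneqq\sum_j\eps_jv_j$. Then
\[
\Prob(S=u)=\frac1{p^L}\sum_{\xi\in\F_p^L}e(-\xi\cdot u)\,\mathbb E\, e(\xi\cdot S).
\]
Independence of the signs factors the expectation as
\[
\mathbb E\,e(\xi\cdot S)=\prod_{j=1}^M\cos\frac{2\pi\,\xi\cdot v_j}{p}.
\]
Here $\xi\cdot v_j\in\F_p$, and the cosine is well defined because $\cos$ is even and $p$-periodic in this normalization.

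\textbf{Step 2: keep only the groups.} Taking absolute values and bounding every factor outside the $K$ groups by $1$ gives
\[
\Prob(S=u)\le \frac1{p^L}\sum_\xi\prod_{k=1}^K g_k(\xi),
\qquad g_k(\xi)\coloneqq\prod_{j\in J_k}\Bigl|\cos\frac{2\pi\,\xi\cdot v_j}{p}\Bigr|,
\]
where $J_1,\dots,J_K$ are the disjoint index groups. This is where disjointness is used: each index $j$ contributes its factor to at most one $g_k$.

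\textbf{Step 3: H\"older.} Apply H\"older's inequality with $K$ exponents all equal to $K$, with respect to the uniform measure on $\F_p^L$:
\[
\frac1{p^L}\sum_\xi\prod_k g_k\le\prod_k\Bigl(\frac1{p^L}\sum_\xi g_k(\xi)^K\Bigr)^{1/K}.
\]

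\textbf{Step 4: change of variables.} Fix $k$. Because $\{v_j\}_{j\in J_k}$ is a basis of $\F_p^L$, the map $\xi\mapsto(\xi\cdot v_j)_{j\in J_k}$ is a linear bijection $\F_p^L\to\F_p^L$. Hence
\[
\frac1{p^L}\sum_\xi g_k(\xi)^K=\prod_{j\in J_k}\frac1p\sum_{a\in\F_p}\Bigl|\cos\frac{2\pi a}{p}\Bigr|^K=\Th_K(p)^L.
\]
Taking the $1/K$-th power of each of the $K$ factors and multiplying them together gives exactly $\Th_K(p)^L$, uniformly in $u$. This proves \eqref{eq:fourier}.

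\textbf{Where care is needed.} The main point is in Step 4. The invertibility of the basis-to-dual-coordinates map is what lets each group contribute the full average $\Th_K(p)$ in every one of its $L$ coordinates. It also explains why H\"older must be applied before the change of variables: a single substitution cannot straighten all $K$ bases at once.

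A minor check is that $|J_k|=L$ for each $k$, which holds because each group is a basis. One should also confirm that the cosine of a residue class is well defined: $\cos(2\pi a/p)$ depends only on $a \bmod p$, so no parity or sign convention for $p$ enters.
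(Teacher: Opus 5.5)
Your proof is correct and follows the paper's argument essentially step for step. Both use Fourier inversion with the cosine factorization from independence, drop the factors outside the $K$ disjoint groups, apply H\"older with all exponents equal to $K$, and then use the linear bijection $\xi\mapsto(\xi\cdot v_j)_{j\in J_k}$ in each group to evaluate each factor as $\Th_K(p)^L$.
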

\begin{proof}
For $\xi,v\in\F_p^L$, let $\langle\xi,v\rangle\coloneqq\sum_{i=1}^L\xi_i v_i$. The corresponding
additive character is $v\mapsto\exp(2\pi i\langle\xi,v\rangle/p)$, where a residue is
represented by any integer. Character orthogonality says that, for $w\in\F_p^L$,
\[
 p^{-L}\sum_{\xi\in\F_p^L}\exp(2\pi i\langle\xi,w\rangle/p)
 =\begin{cases}1,&w=0,\\0,&w\ne0.\end{cases}
\]
For a nonzero vector $w$, this follows by summing first in a coordinate where $w$ is nonzero. Apply the
identity to $w\coloneqq\sum_j\eps_jv_j-u$ and take expectations. The character mean on $\eps_jv_j$ is
$\cos(2\pi\langle\xi,v_j\rangle/p)$. Independence and the triangle inequality give
\[
 \Prob\left(\sum_j\eps_jv_j=u\right)
 \leq p^{-L}\sum_{\xi\in\F_p^L}
       \prod_{j=1}^M\left|\cos\frac{2\pi\langle\xi,v_j\rangle}{p}\right|.
\]
Discard the factors outside the selected bases, since they are at most $1$. Write the $h$th
basis as $b_{h1},\ldots,b_{hL}$, and set
\[
 F_h(\xi)\coloneqq\prod_{\ell=1}^L
       \left|\cos\frac{2\pi\langle\xi,b_{h\ell}\rangle}{p}\right|.
\]
H\"older's inequality, applied to the uniform probability measure on $\F_p^L$, yields
\[
 p^{-L}\sum_\xi\prod_{h=1}^K F_h(\xi)
 \leq\prod_{h=1}^K\left(p^{-L}\sum_\xi F_h(\xi)^K\right)^{1/K}.
\]
For every $h$, the map $\xi\mapsto(\langle\xi,b_{h\ell}\rangle)_{\ell=1}^L$ is a bijection of
$\F_p^L$. Consequently,
\[
 p^{-L}\sum_\xi F_h(\xi)^K
 =\left(\frac1p\sum_{a\in\F_p}
       \left|\cos\frac{2\pi a}{p}\right|^K\right)^L
 =\Th_K(p)^L.
\]
Substitution into H\"older's inequality proves the result. The coordinates of a random image
in $\F_p^L$ need not be independent. The argument uses only the disjointness of the original
coefficients belonging to the selected bases.
\end{proof}

To apply the lemma, we need the following bound in terms of the prime and the number of bases.

\begin{lemma}\label{lem:theta}
For every odd prime $p$ and integer $K\geq1$, the cosine average satisfies
\begin{equation}\label{eq:theta}
 \Th_K(p)\leq\frac1p+\sqrt{\frac{2}{\pi K}}
 \leq\frac1p+\frac1{\sqrt K}.
\end{equation}
In particular, $\Th_{p^2}(p)$ is less than $2/p$.
\end{lemma}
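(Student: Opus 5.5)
The plan is to isolate the residue $a=0$, which contributes exactly $1/p$ to the average since $|\cos 0|^K=1$, and to compare the remaining sum with an integral of $|\cos|^K$. Write the average over the nonzero residues as
\[
 \frac1p\sum_{a=1}^{p-1}\left|\cos\frac{2\pi a}{p}\right|^K .
\]
The values $|\cos(2\pi a/p)|$ depend only on $2a/p$ modulo $1$, so the points $t_a\coloneqq 2\pi a/p$ are equally spaced with spacing $2\pi/p$ on the circle, and $|\cos t|^K$ has period $\pi$. I would use a monotone Riemann-sum comparison. On $[0,\pi/2]$ the function $|\cos t|^K$ is decreasing, and it is increasing on $[\pi/2,\pi]$. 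For a monotone function, the sample at each nonzero node is bounded by the integral over the adjacent cell of length $2\pi/p$ lying on the side toward the peak. Doing this for the nodes in each monotone stretch, the cells are disjoint and avoid the peak cell. This should give
\[
 \frac1p\sum_{a=1}^{p-1}\left|\cos\frac{2\pi a}{p}\right|^K\leq\frac1p\cdot\frac{p}{2\pi}\int_0^{2\pi}|\cos t|^K\,dt=\frac{2}{\pi}\int_0^{\pi/2}\cos^K t\,dt .
\]
The point needing care is that the peaks of $|\cos t|^K$ sit at $t=0$ and $t=\pi$. Only $t=0$ is a node, since $p$ is odd, so the peak at $\pi$ falls strictly between two nodes. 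For the two nodes flanking $\pi$, I would assign cells on the far sides, away from $\pi$. This requires checking that the cells do not overlap with those assigned from the neighbouring monotone stretches. With $p$ odd the node set is symmetric under $t\mapsto 2\pi-t$, so a symmetric assignment should work. This bookkeeping is the main (mild) obstacle.

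Next, I would bound the Wallis integral $W_K=\int_0^{\pi/2}\cos^Kt\,dt$. The standard facts are that $W_KW_{K+1}=\pi/(2(K+1))$ and that $W_K$ is decreasing. Together these give $W_K^2\leq W_{K-1}W_K=\pi/(2K)$, that is, $W_K\leq\sqrt{\pi/(2K)}$. Then
\[
 \frac2\pi W_K\leq\frac{2}{\pi}\sqrt{\frac{\pi}{2K}}=\sqrt{\frac{2}{\pi K}},
\]
which is the first inequality in \eqref{eq:theta}. The second inequality follows from $2/\pi<1$.

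Finally, take $K=p^2$. Then $1/\sqrt K=1/p$, so the bound becomes $\Th_{p^2}(p)\leq 1/p+\sqrt{2/\pi}\,/p$. Since $\sqrt{2/\pi}<1$, this is strictly less than $2/p$.

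An alternative that avoids the Riemann-sum bookkeeping is to bound the nonzero terms directly. One would use $|\cos x|\leq e^{-x^2/2}$ for $|x|\leq\pi/2$ after folding each point to its distance $\delta$ from $\{0,\pi\}$, with $\delta\geq\pi/p$, and then compare with a Gaussian sum. The integral comparison gives the cleaner constant, so I would try that first.
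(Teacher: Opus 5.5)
Your target inequality $\frac1p\sum_{a\ne0}|\cos(2\pi a/p)|^K\le\frac2\pi W_K$ is true, and your Wallis step and the deduction for $K=p^2$ are correct. The gap is the Riemann-sum comparison on the unfolded circle with cells of length $2\pi/p$: it cannot be carried out, and the fix you propose fails.

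\emph{Why the fix fails.} Take the flanking node $\pi-\pi/p$ and its arc on the far side, away from the peak at $\pi$. On that arc $|\cos t|^K$ lies below the sample $\cos^K(\pi/p)$, so the monotone comparison gives the reverse inequality. For $p\ge7$ this arc is also exactly the cell $[\pi-3\pi/p,\pi-\pi/p]$ already assigned to the node $\pi-3\pi/p$.

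\emph{Why no repair of this kind works.} For $p\ge5$ there is no disjoint assignment of arcs of length $2\pi/p$ with pointwise domination.
\begin{itemize}
\item The flanking nodes $\pi\pm\pi/p$ need an arc on which $|\cos t|\ge\cos(\pi/p)$. That set is exactly two arcs of length $2\pi/p$, centred at $0$ and at $\pi$.
\item Both arcs are fixed by $t\mapsto2\pi-t$, so a symmetric assignment is impossible. In any assignment, one flanking node must take the arc around $0$, so the region near $0$ cannot be avoided.
\item The nodes $\pm2\pi/p$ then need an arc of length $2\pi/p$ inside $\{|\cos t|\ge\cos(2\pi/p)\}$ minus the two used arcs. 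What remains is four arcs of length $\pi/p$, so nothing fits.
\end{itemize}

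\emph{The missing idea: fold before comparing with the integral.} This is the paper's first step. Since $|\cos|$ has period $\pi$ and $a\mapsto2a$ permutes $\F_p$, the points $2\pi a/p$ reduced modulo $\pi$ are exactly $\pi b/p$ with $0\le b<p$. Hence
$p\,\Th_K(p)=1+2\sum_{r=1}^{(p-1)/2}\cos^K(\pi r/p)$.
Now the spacing is $\pi/p$ and the peak $0$ is a node. The disjoint cells $[\pi(r-1)/p,\pi r/p]\subset[0,\pi/2]$ give $\Th_K(p)\le\frac1p+\frac2\pi W_K$ directly, and your Wallis bound $W_K\le\sqrt{\pi/(2K)}$ finishes the proof.

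With that repair, your route is a valid alternative to the paper's. The paper instead applies $\log\cos x\le-x^2/2$ to the folded sum and compares the resulting Gaussian sum with $\int_0^\infty$. Your ``alternative'' paragraph is essentially that proof. Both routes end at the same constant $\sqrt{2/(\pi K)}$, so the integral comparison does not give a cleaner constant; it only gives the slightly sharper intermediate bound $\frac2\pi W_K$.
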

\begin{proof}
Multiplication by $2$ permutes the residues modulo $p$. Taking absolute values removes changes
of the angle by multiples of $\pi$. Consequently
\[
 p\Th_K(p)=1+2\sum_{r=1}^{(p-1)/2}\cos(\pi r/p)^K.
\]
On the interval $0\leq x<\pi/2$, integration of the inequality $\tan x\geq x$ gives
$\log\cos x\leq-x^2/2$. Hence, we conclude that
\[
 \Th_K(p)\leq\frac1p+
 \frac2p\sum_{r\geq1}\exp\left(-\frac{\pi^2Kr^2}{2p^2}\right).
\]
The summand is decreasing as a function of $r\geq0$. Bounding the sum by the integral from $0$
to infinity gives
\[
 \frac2p\sum_{r\geq1}\exp\left(-\frac{\pi^2Kr^2}{2p^2}\right)
 \leq\frac2p\int_0^\infty
       \exp\left(-\frac{\pi^2Kx^2}{2p^2}\right)\,dx
 =\sqrt{\frac2{\pi K}}.
\]
This proves \eqref{eq:theta}. The final assertion follows because $\sqrt{2/\pi}$ is less than $1$.
\end{proof}

\section{Singularity at a prescribed closed point}\label{sec:local}

To use the concentration estimates, we express singularity at a prescribed point as the
vanishing of a vector in a finite algebra. We keep the residue-field degree explicit so that
we can later sum the resulting probabilities over projective points.
To this end,
fix an odd prime $p$, write $\F_{p^e}$ for the degree-$e$ extension of $\F_p$, and put
$S\coloneqq\F_p[y_1,\ldots,y_n]$. A closed point $P$ of $\A^n_{\F_p}$ is
specified by a maximal ideal $\m_P\subset S$. Define its residue field and degree by
\[
 \kappa(P)\coloneqq S/\m_P,\qquad
 \deg P\coloneqq[\kappa(P):\F_p].
\]
Equivalently, the closed point is an orbit of this size under the map that raises each
geometric coordinate to the $p$th power. For $g\in S$, the notation $g(P)$
means its image in $\kappa(P)$.

The singularity criterion requires both $g(P)$ and its first derivatives. Following Poonen's
local sieve \cite[Lemma~2.3]{Poo}, we use the finite algebra describing the first-order
neighborhood of $P$:
\[
 A_P\coloneqq S/\m_P^2.
\]

The following lemma shows that the algebra records the value and first derivatives at the
point and at each of its conjugates. For a nonzero hypersurface equation, singularity is
therefore the vanishing of a single vector over $\F_p$.

\begin{lemma}\label{lem:jet}
Put $e\coloneqq\deg P$. The algebra $A_P$ has dimension $(n+1)e$ over $\F_p$. A polynomial $g\in S$
maps to zero in $A_P$ if and only if
\begin{equation}\label{eq:sing-affine}
 g(P)=\del_1g(P)=\cdots=\del_ng(P)=0.
\end{equation}
If $g$ is nonzero, this is equivalent to the hypersurface $g=0$ being singular at $P$.
\end{lemma}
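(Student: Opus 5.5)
We prove the dimension count first, then identify the kernel of $S\to A_P$ with the vanishing conditions \eqref{eq:sing-affine}, and finally invoke \eqref{eq:jacobian}.

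\textbf{Dimension.} Use the filtration $A_P\supset \m_P/\m_P^2\supset 0$. The quotient $S/\m_P=\kappa(P)$ has $\F_p$-dimension $e$, so it suffices to show $\dim_{\kappa(P)}\m_P/\m_P^2=n$. This holds because $S_{\m_P}$ is a regular local ring of dimension $n$: polynomial rings over fields are regular and maximal ideals have height $n$. Alternatively, argue via base change. The field $\kappa(P)$ is separable over $\F_p$ because finite fields are perfect. Hence $\m_P\otimes\F_{p^e}$ splits in $\F_{p^e}[y]$ as the product of the $e$ pairwise comaximal maximal ideals $\m_{a_j}=(y_1-a_{j,1},\dots,y_n-a_{j,n})$ of the conjugate points $a_1,\dots,a_e$. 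Since these ideals are comaximal, the Chinese remainder theorem gives
\[
A_P\otimes_{\F_p}\F_{p^e}\cong\prod_{j=1}^e\F_{p^e}[y]/\m_{a_j}^2 .
\]
The base change is exact, so $(\m_P^2)\otimes\F_{p^e}=\prod_j\m_{a_j}^2=\bigcap_j\m_{a_j}^2$. Each factor has basis $1,z_1,\dots,z_n$ with $z_i=y_i-a_{j,i}$, so it has dimension $n+1$. The total dimension is therefore $(n+1)e$.

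\textbf{Kernel.} The same decomposition identifies the kernel. Because $\F_p\to\F_{p^e}$ is faithfully flat, $g$ maps to zero in $A_P$ if and only if it maps to zero in every factor $\F_{p^e}[y]/\m_{a_j}^2$. The first-order Taylor expansion
\[
g(a_j+z)\equiv g(a_j)+\sum_i(\del_ig)(a_j)z_i\pmod{(z)^2}
\]
is exact for polynomials over any ring, so vanishing in the $j$th factor means exactly $g(a_j)=\del_ig(a_j)=0$ for all $i$. The derivatives $\del_i g$ have coefficients in $\F_p$, so Frobenius permutes the conjugates $a_j$ and vanishing at one conjugate is equivalent to vanishing at all. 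This is exactly the statement \eqref{eq:sing-affine}, read as equalities in $\kappa(P)$.

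\textbf{Singularity.} For nonzero $g$, apply \eqref{eq:jacobian} at any geometric point $a_j$ above $P$. Singularity of the closed point $P$ on $g=0$ means geometric singularity at its conjugates, which are all simultaneously singular or smooth by the Frobenius symmetry. So it is equivalent to \eqref{eq:sing-affine}.

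The main obstacle is the splitting of $\m_P^2$ after base change, namely that $(\m_P\F_{p^e}[y])^2=\prod_j\m_{a_j}^2$. This requires the separability of $\kappa(P)$ so that the extended ideal is radical, together with comaximality of the $\m_{a_j}$. Both follow from $\F_p$ being perfect.
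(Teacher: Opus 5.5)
Your proposal is correct and follows essentially the same route as the paper: extend scalars so that $\m_P$ splits into the $e$ pairwise comaximal maximal ideals of the conjugate points, apply the Chinese remainder theorem to their squares, read off the dimension and the kernel from the first-order Taylor expansion in each factor, and finish with Frobenius conjugation and the Jacobian criterion \eqref{eq:jacobian}. Extending to $\F_{p^e}$ rather than $\overline{\F}_p$, and adding the alternative dimension count via regularity of $S_{\m_P}$, are only cosmetic variations.
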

\begin{proof}
Let $\bar k\coloneqq\overline{\F}_p$. Since $\kappa(P)/\F_p$ is separable, the ideal $\m_P\bar k[y]$ is
the intersection of the pairwise comaximal maximal ideals of the $e$ conjugate geometric points
$P_1,\ldots,P_e$. For comaximal ideals, intersection equals product. The same assertion applied
to their squares, followed by the Chinese remainder theorem, gives
\[
 A_P\otimes_{\F_p}\bar k
 \simeq\prod_{j=1}^e\bar k[y_1,\ldots,y_n]/\m_{P_j}^2.
\]
Write $a_1,\ldots,a_n$ for the coordinates of $P_j$. The corresponding factor has basis
\[
 1,\ y_1-a_1,\ldots,y_n-a_n.
\]
Its dimension is $n+1$. The first-order Taylor expansion is
\[
 g\equiv g(P_j)+\sum_{i=1}^n\del_i g(P_j)(y_i-a_i)
       \pmod{\m_{P_j}^2}.
\]
This proves the dimension formula. It also proves \eqref{eq:sing-affine}: extending scalars to a
field does not turn a nonzero vector into zero, and the equations are preserved by conjugation.
For nonzero $g$, the equivalence with singularity is the criterion \eqref{eq:jacobian}.
\end{proof}

\subsection{Finding disjoint monomial bases}

To apply the concentration bound, we construct monomial bases with disjoint coefficient
supports. We first find one basis of bounded degree and then multiply it by powers of a
coordinate that is invertible at the point.

We use the elementary filtration argument from Poonen's interpolation and evaluation
estimates \cite[Lemmas~2.1 and~2.5]{Poo}. We state it for a finite algebra so that it applies
to both residue fields and first-order neighborhoods. The observation is that the degree
filtration increases in dimension until it is the whole algebra.

\begin{lemma}\label{lem:filtration}
Let $A$ be a nonzero finite-dimensional commutative unital algebra over a field $k$. Suppose
that $A$ is generated as a $k$-algebra by $y_1,\ldots,y_n$, and put $L\coloneqq\dim_k A$.
For an integer $s\geq0$, define the degree filtration by
\[
 V_s\coloneqq\operatorname{span}_k
 \{y_1^{a_1}\cdots y_n^{a_n}:a_i\in\Z_{\geq0},\ a_1+\cdots+a_n\leq s\}.
\]
Then its dimension satisfies
\begin{equation}\label{eq:filtration}
 \dim_k V_s\geq\min(s+1,L).
\end{equation}
In particular, $A$ has a basis consisting of images of monomials of degree at most $L-1$.
\end{lemma}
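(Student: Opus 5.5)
The plan is to follow the chain $V_0\subseteq V_1\subseteq\cdots$ and show that it strictly increases until it fills $A$. Since $1\in A$ is nonzero (as $A\neq0$), $V_0=k\cdot1$ has dimension $1$. Each $V_s$ is finite-dimensional and $V_s\subseteq V_{s+1}$. We also have $V_{s+1}=V_s+\sum_i y_iV_s$, because every monomial of degree at most $s+1$ is either of degree at most $s$ or is $y_i$ times a monomial of degree exactly $s$.

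The key step is to show that a stabilization $V_{s+1}=V_s$ forces $V_s=A$. If $V_{s+1}=V_s$, then $y_iV_s\subseteq V_s$ for every $i$. Hence $V_s$ is stable under multiplication by each generator, and therefore by every monomial. Since $V_s$ contains $1$, it contains every monomial in the $y_i$. These images span $A$ because the $y_i$ generate $A$ as a $k$-algebra, so $V_s=A$. Once $V_s=A$, every later $V_t$ also equals $A$.

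Now induct on $s$. If $V_s\neq A$, then $V_{s+1}\supsetneq V_s$, so $\dim V_{s+1}\geq\dim V_s+1$. Combined with $\dim V_0=1$, this gives $\dim V_s\geq s+1$ for as long as $V_s\neq A$, and $\dim V_s=L$ once $V_s=A$. Together these yield $\dim_kV_s\geq\min(s+1,L)$. Taking $s=L-1$ gives $V_{L-1}=A$. So the images of the monomials of degree at most $L-1$ span $A$, and we may extract a basis from this spanning set.

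There is no real obstacle. The only point needing care is the stabilization argument, which uses that $1\in V_0$ and that $V_s$ is closed under multiplication by each $y_i$. The whole proof is elementary linear algebra.
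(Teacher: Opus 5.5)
Your proof is correct and is the paper's argument: $V_0=k\cdot1$, a stabilization $V_{s+1}=V_s$ makes $V_s$ stable under each $y_i$ and hence (since it contains $1$) equal to $A$, and otherwise the dimension grows by at least one. You also spell out details the paper leaves implicit, namely that $1\neq0$ because $A\neq0$, that $y_iV_s\subseteq V_{s+1}$, and that the chain is increasing so $V_s\neq A$ forces $V_j\neq A$ for all $j\leq s$.
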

\begin{proof}
We have $V_0=k\cdot1$. If $V_s$ and $V_{s+1}$ coincide, then $V_s$ is stable under multiplication by every
generator. Since it contains $1$, it is all of $A$. Otherwise the dimension increases by at
least one. This proves \eqref{eq:filtration}. A basis may be selected from any spanning set.
\end{proof}

At a point other than the origin, some coordinate is a unit in the first-order algebra.
Multiplying by sufficiently separated powers of this coordinate gives bases represented by
disjoint sets of monomials.

\begin{lemma}\label{lem:packing}
Let $P$ be a closed point other than the origin, and put $L\coloneqq(n+1)\deg P$. Suppose that the
positive integer $K$ satisfies $KL\leq d+1$. The monomials of
total degree at most $d$ contain $K$ pairwise disjoint groups whose images form bases of $A_P$.
\end{lemma}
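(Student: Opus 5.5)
Since $P$ is not the origin, some coordinate $y_t$ does not lie in $\m_P$. The plan is to show that $y_t$ is a unit in $A_P$, take a single monomial basis of bounded degree from Lemma~\ref{lem:filtration}, and then shift it by well-separated powers of $y_t$ to get $K$ bases with disjoint monomial supports.

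\emph{Step 1: $y_t$ is a unit.} Because $y_t\notin\m_P$, its image in $\kappa(P)$ is nonzero, hence invertible there. Since $\m_P/\m_P^2$ is nilpotent in $A_P$, an element of $A_P$ whose image in $A_P/(\m_P/\m_P^2)=\kappa(P)$ is a unit is itself a unit. So multiplication by $y_t^j$ is an $\F_p$-linear automorphism of $A_P$ for every $j\geq0$.

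\emph{Step 2: one basis of bounded degree.} The algebra $A_P$ is generated by $y_1,\ldots,y_n$ and has dimension $L=(n+1)\deg P$ by Lemma~\ref{lem:jet}. Lemma~\ref{lem:filtration} therefore gives a set $\mathcal M$ of $L$ monomials, each of total degree at most $L-1$, whose images form a basis of $A_P$.

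\emph{Step 3: shifted bases.} For $h=0,1,\ldots,K-1$, set $\mathcal M_h\coloneqq y_t^{hL}\mathcal M$. By Step 1, the images of $\mathcal M_h$ form a basis of $A_P$. Every monomial in $\mathcal M_h$ has total degree between $hL$ and $hL+L-1$. These degree windows are pairwise disjoint, so the groups $\mathcal M_h$ are pairwise disjoint. The largest total degree that occurs is $(K-1)L+L-1=KL-1$, and the hypothesis $KL\leq d+1$ gives $KL-1\leq d$, so all these monomials have total degree at most $d$. Within each group, distinct monomials of $\mathcal M$ stay distinct after multiplication by $y_t^{hL}$, so each $\mathcal M_h$ has exactly $L$ elements and is a genuine basis.

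Each step is short. The main point needing care is Step 1, the invertibility of $y_t$ in the nonreduced algebra $A_P$, which is handled by nilpotence of the maximal ideal. The degree bookkeeping in Step 3 is exactly where the hypothesis $KL\leq d+1$ enters.
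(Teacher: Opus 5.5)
Your proof is correct and follows the paper's argument: you show the chosen coordinate is a unit in the local algebra $A_P$, take a monomial basis of degree at most $L-1$ from Lemma~\ref{lem:filtration}, and shift it by $y_t^{hL}$ for $0\leq h<K$. The only cosmetic difference is that you separate the groups by total-degree windows $[hL,hL+L-1]$, whereas the paper uses the corresponding windows for the exponent of the unit coordinate; either gives disjointness.
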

\begin{proof}
Choose a coordinate nonzero at $P$, and relabel it $y_1$. The algebra $A_P$ is local, with
unique maximal ideal $\m_P/\m_P^2$. Since $y_1$ has nonzero image in $\kappa(P)$, it is a unit
in $A_P$. By Lemma~\ref{lem:filtration}, there is a monomial basis $B$ of degree at most $L-1$.
Use
\[
 B_h\coloneqq y_1^{hL}B,\qquad 0\leq h<K.
\]
Multiplication by a unit preserves a basis. Each monomial of $B$ has $y_1$-exponent between $0$
and $L-1$. The $y_1$-exponents in $B_h$ therefore lie between $hL$ and $(h+1)L-1$, so different
groups contain no common formal monomial. Every monomial in $B_h$ has total degree at most
$(h+1)L-1\leq d$.
\end{proof}

The following example gives two disjoint groups of monomials that map to bases of the
three-dimensional algebra at a rational point of an affine plane. Their combined images
need not be linearly independent.

\begin{example}\label{ex:jet}
Choose $a,b\in\F_p$ with $a$ nonzero, and consider the affine-plane point
$P\coloneqq(a,b)$. In $A_P$, put $u\coloneqq y_1-a$ and $v\coloneqq y_2-b$. These elements give the presentation
\[
 A_P\simeq\F_p[u,v]/(u,v)^2,
 \qquad y_1=a+u,\quad y_2=b+v.
\]
The monomials $1,y_1,y_2$ form a basis. Multiplication by the unit $y_1^3$ gives another basis,
represented by $y_1^3,y_1^4,y_1^3y_2$. For $d\geq4$, these two groups use six distinct
coefficients of a polynomial containing every monomial of total degree at most $d$. Their images
need not be independent of one another; each group need only be a basis. This is exactly the
hypothesis of Lemma~\ref{lem:fourier}.
\end{example}

\subsection{The local probability and the projective count}

We can now bound the singularity probability at a prescribed point. The bound depends on
its residue degree and can therefore be summed over projective points of bounded degree.

\begin{proposition}\label{prop:local}
For a tuple $\beta$ with nonnegative integer entries $\beta_1,\ldots,\beta_n$, write
$|\beta|\coloneqq\sum_i\beta_i$ and $y^\beta\coloneqq\prod_i y_i^{\beta_i}$. Let
\[
 g\coloneqq\sum_{|\beta|\leq d}\eps_\beta y^\beta\in\F_p[y_1,\ldots,y_n],
\]
where the coefficients are independent uniform signs.
Let $P$ be a closed point of degree $e$ other than the origin, and let the positive integer $K$ satisfy
$(n+1)eK\leq d+1$. Then the singularity probability at $P$ satisfies
\begin{equation}\label{eq:local}
 \Prob(g=0\text{ is singular at }P)
 \leq\Th_K(p)^{(n+1)e}.
\end{equation}
\end{proposition}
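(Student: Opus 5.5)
The plan is to convert singularity at $P$ into a point-probability statement in the first-order algebra and then apply Lemma~\ref{lem:fourier} with the disjoint bases from Lemma~\ref{lem:packing}.

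First, $g$ is nonzero in $\F_p[y_1,\ldots,y_n]$, since every coefficient is $\pm1$ and $p$ is odd. Lemma~\ref{lem:jet} then shows that the event ``$g=0$ is singular at $P$'' is exactly the event that the image of $g$ in $A_P=S/\m_P^2$ is zero. Moreover, $A_P$ is an $\F_p$-vector space of dimension $L\coloneqq(n+1)e$.

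Next, fix an $\F_p$-linear isomorphism $A_P\simeq\F_p^L$. For each monomial $y^\beta$ with $|\beta|\leq d$, let $v_\beta\in\F_p^L$ be the image of $y^\beta$. Then the image of $g$ is $\sum_{|\beta|\leq d}\eps_\beta v_\beta$, which is a signed sum of fixed vectors with independent uniform signs, exactly as in Lemma~\ref{lem:fourier} with $M=N_{d,n}$. The vectors are indexed by monomials, so the signs are attached to formal monomials rather than to their images.

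Since $P$ is not the origin and $KL=(n+1)eK\leq d+1$, Lemma~\ref{lem:packing} supplies $K$ pairwise disjoint groups of monomials of total degree at most $d$, each mapping to a basis of $A_P$. Disjointness of the groups means they are disjoint sets of indices $\beta$, which is the hypothesis of Lemma~\ref{lem:fourier}; as Example~\ref{ex:jet} illustrates, no independence between different groups is needed.

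Applying Lemma~\ref{lem:fourier} with $u=0$ gives
\[
\Prob\Bigl(\sum_\beta\eps_\beta v_\beta=0\Bigr)\leq\Th_K(p)^{L}=\Th_K(p)^{(n+1)e},
\]
which is \eqref{eq:local}.

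All the substantive work is already contained in the packing lemma and the Fourier–H\"older bound, so I expect no real obstacle. The only points to check are the nonvanishing of $g$, which is needed to invoke the singularity criterion in Lemma~\ref{lem:jet}, and that the monomial index sets in Lemma~\ref{lem:packing} really are disjoint as formal monomials of the given $g$. Both are immediate from the construction.
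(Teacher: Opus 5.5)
Your proposal is correct and matches the paper's proof: Lemma~\ref{lem:jet} identifies singularity at $P$ with vanishing of the image of $g$ in $A_P\simeq\F_p^{(n+1)e}$, and Lemma~\ref{lem:fourier} with $u=0$, applied to the disjoint monomial bases from Lemma~\ref{lem:packing}, gives the bound. Your explicit check that $g$ is nonzero modulo the odd prime $p$ is a small detail the paper leaves implicit.
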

\begin{proof}
Apply Lemma~\ref{lem:fourier} to the images of the monomials in the $\F_p$-vector space $A_P$,
using the bases in Lemma~\ref{lem:packing}. By Lemma~\ref{lem:jet}, singularity is the event
that the resulting random sum is zero.
\end{proof}

Let $\bar f$ be the reduction of the original homogeneous sign form modulo $p$. The $n+1$
coordinate vertices have only one nonzero homogeneous coordinate. None belongs to
$X_{\bar f}$, since the value of the form there is a nonzero pure-power coefficient. Every
other projective closed point lies in a standard affine chart where it is not the origin.
Dehomogenization gives one independent sign on each monomial of total degree at most $d$,
without repetitions, and does not change the residue field. Proposition~\ref{prop:local}
therefore applies at every possible singular point. We choose one chart for each point,
so the projective count does not require an additional factor of $n+1$.

For each positive integer $e$, define
\[
 C_e\coloneqq\#\{P:P\text{ is a closed point of }\PP^n_{\F_p},\ \deg P=e\}.
\]
Each of these closed points contributes $e$ distinct $\F_{p^e}$-points. Therefore
\begin{equation}\label{eq:point-count}
 eC_e\leq\#\PP^n(\F_{p^e})
 =1+p^e+\cdots+p^{ne}
 \leq\frac{p}{p-1}p^{ne}.
\end{equation}
For an integer $R$ with $0\leq R\leq\lfloor(d+1)/(n+1)\rfloor$, set
\begin{equation}\label{eq:Ke}
 K_e\coloneqq\left\lfloor\frac{d+1}{(n+1)e}\right\rfloor
 \quad(1\leq e\leq R).
\end{equation}
Each $K_e$ is positive. A union bound gives
\begin{equation}\label{eq:low}
 \Prob(\exists\text{ singular closed }P:\deg P\leq R)
 \leq\frac{p}{p-1}\sum_{e=1}^R
       \frac{p^{ne}}e\Th_{K_e}(p)^{(n+1)e}.
\end{equation}
The empty sum is zero. This union bound requires no independence between different points
or affine charts.

\section{Closed points of large degree}\label{sec:high}

For points of large degree, we adapt Poonen's sieve \cite[Lemma~2.6]{Poo}. We partition the
coefficients rather than use the randomization available for uniform finite-field
coefficients.
The local estimate concerns a point fixed in advance. Here we first use some coefficients
to make the critical locus finite with high probability. We then use the remaining
coefficients to estimate the probability that a critical point lies on the hypersurface.
The partition below keeps these choices independent.

\subsection{Partitioning the coefficients}

Frobenius allows us to give each derivative its own random coefficient block and reserve
another block for the hypersurface equation. We do this by partitioning the existing
coefficients, so their sign distribution is unchanged.

Work on one standard affine chart. In this section, the indices $\alpha$ and $\beta$ are tuples
in $\Z_{\geq0}^n$, with the affine monomial notation introduced in Section~\ref{sec:local}.
Write
\[
 g(y)\coloneqq\sum_{|\alpha|\leq d}\eps_\alpha y^\alpha,
 \qquad s\coloneqq\left\lfloor\frac{d-1}{p}\right\rfloor,
 \qquad t\coloneqq\left\lfloor\frac d p\right\rfloor.
\]
Let $e_i$ denote the $i$th standard basis vector in $\Z^n$. Define
\begin{align*}
 H&\coloneqq\sum_{|\beta|\leq t}\eps_{p\beta}y^\beta,\qquad
 G_i\coloneqq\sum_{|\beta|\leq s}\eps_{p\beta+e_i}y^\beta,\quad 1\leq i\leq n,\\
 U&\coloneqq\sum_{\substack{|\alpha|\leq d\\
              \alpha\not\equiv0,e_1,\ldots,e_n\pmod p}}
              \eps_\alpha y^\alpha.
\end{align*}
The congruences are coordinatewise, and the residue classes $0,e_1,\ldots,e_n$ are distinct.
A monomial of total degree at most $d$ in class $0$ has exponent $p\beta$ with
$|\beta|\leq t$. One in class $e_i$ has exponent $p\beta+e_i$ with $|\beta|\leq s$. Thus the
formulas include every monomial in those classes. The $n+2$ polynomials are mutually
independent because they use disjoint sets of the original signs. Since $\eps^p=\eps$ in
$\F_p$, we obtain
\begin{equation}\label{eq:partition}
 g=U+\sum_{i=1}^n y_iG_i^p+H^p.
\end{equation}

This identity uses the original coefficients, with none added or resampled. Differentiating gives
\begin{equation}\label{eq:derivatives}
 \del_i g=\del_i U+G_i^p \qquad(1\leq i\leq n).
\end{equation}
Therefore, each derivative has its own independent block once $U$ has been fixed. The block $H$ does
not occur in any derivative.

\subsection{A degree bound for successive intersections}

We require an affine B\'ezout bound that counts the degrees of all irreducible components,
including components of different dimensions. This is classical (see Heintz \cite{Heintz}
and Fulton \cite[Example~8.4.6]{Fulton}). We give the hypersurface-section argument used
below. The facts about Hilbert polynomials are recalled in
\cite[Chapter~I, \S7]{Hartshorne}.

For a projective variety $T$ of dimension $r$, its degree $\deg T$ is characterized by the
leading term $(\deg T)q^r/r!$ of the Hilbert polynomial of its homogeneous coordinate ring. For
an affine algebraic set $Z$ over an algebraically closed field, define
\[
\delta(Z)\coloneqq\sum_V\deg\overline V,
\]
where $V$ runs over the distinct irreducible components and $\overline V$ denotes the projective
closure of $V$. We set $\delta(\varnothing)\coloneqq0$. In this definition every component is counted
once, without multiplicity. Therefore, the number of components is at most $\delta(Z)$, and the value
of $\delta$ on a finite reduced set is its number of points.

The following lemma bounds the number of components obtained by imposing successive
equations. It also shows that an equation lowers the dimension of a component unless it
vanishes identically there.

\begin{lemma}[Affine B\'ezout inequality]\label{lem:bezout}
Let $D\geq1$ be an integer, and let $h_1,\ldots,h_j$ be polynomials in $n$ variables of degree
at most $D$. Then the following bound holds:
\begin{equation}\label{eq:bezout}
 \delta\bigl(Z(h_1,\ldots,h_j)\bigr)\leq D^j.
\end{equation}
Moreover, if $V$ is irreducible and positive-dimensional and a polynomial $h$ does not vanish
identically on $V$, every irreducible component of $V\cap Z(h)$ has dimension at most
$\dim V-1$.
\end{lemma}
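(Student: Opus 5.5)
We prove the two assertions in reverse order, since the dimension statement is the engine of the degree bound. For the second assertion, let $V$ be irreducible and positive-dimensional, and suppose $h$ does not vanish identically on $V$. Then $V\cap Z(h)$ is a proper closed subset of $V$. Every irreducible component $W$ of it is therefore a proper closed irreducible subset of the irreducible space $V$. Since $V$ is irreducible, any proper closed subset has strictly smaller dimension, so $\dim W\leq\dim V-1$. (Krull's principal ideal theorem would give equality for nonempty components, but only the inequality is needed.)

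For the degree bound we proceed by induction on $j$. The case $j=0$ gives $Z=\A^n$, with $\delta=1=D^0$. The inductive step is the following claim: for any affine algebraic set $Z$ and any $h$ of degree at most $D$,
\[
 \delta(Z\cap Z(h))\leq D\,\delta(Z).
\]
Since every component of $Z\cap Z(h)$ is contained in $V\cap Z(h)$ for some component $V$ of $Z$, and is then a component of that intersection, it suffices to prove $\delta(V\cap Z(h))\leq D\deg\overline V$ for each irreducible $V$. Components may be counted more than once in this reduction, but overcounting only helps the inequality.

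The irreducible case splits into three situations. If $h$ vanishes on $V$, then $V\cap Z(h)=V$, and $\delta=\deg\overline V\leq D\deg\overline V$. If $V$ is a point not on $Z(h)$, the intersection is empty. Otherwise $V$ is positive-dimensional and $h\not\equiv0$ on it. Homogenize $h$ with $x_0$ to a form $h^{\mathrm h}$ of degree $D'\leq D$; note $D'\geq1$, since a nonzero constant $h$ would give an empty intersection. The form $h^{\mathrm h}$ does not vanish on $\overline V$. The projective closure of each component of $V\cap Z(h)$ is contained in some component of $\overline V\cap Z(h^{\mathrm h})$. Distinct affine components have distinct closures, and a component of the projective intersection meeting the affine chart is the closure of a unique affine component. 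Hence
\[
 \delta(V\cap Z(h))\leq\sum_{W}\deg W,
\]
where $W$ runs over the components of $\overline V\cap Z(h^{\mathrm h})$.

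The main obstacle is the projective inequality
\[
 \sum_W\deg W\leq D'\deg\overline V,
\]
where the components $W$ have possibly mixed dimensions, whereas the Hilbert-polynomial refined Bézout theorem naturally counts only the top-dimensional components. My first attempt is to cite the refined B\'ezout theorem (Fulton, Example~8.4.6), which states precisely that the sum of the degrees of the distinct irreducible components of an intersection is at most the product of the degrees. As a self-contained alternative, I would use the Hilbert polynomial of $k[x]/(I(\overline V),h^{\mathrm h})$. Since $\overline V$ is irreducible and $h^{\mathrm h}\notin I(\overline V)$, the form $h^{\mathrm h}$ is a nonzerodivisor on its coordinate ring. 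The Hilbert polynomial of this quotient is therefore $P(q)-P(q-D')$, whose leading term is $D'\deg\overline V\,q^{r-1}/(r-1)!$, with $r=\dim\overline V$. Since the intersection is pure of dimension $r-1$ by Krull, the additivity of degree over primary components gives the claim with multiplicities at least one. This equidimensionality step is what makes the Hilbert-polynomial argument suffice here, so the mixed-dimension issue only arises across later stages of the induction. The induction handles that issue, since we sum over components of each $V$ separately.
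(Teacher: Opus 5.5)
Your proposal is correct and follows essentially the same route as the paper: you reduce $\delta(Z\cap Z(h))\leq D\,\delta(Z)$ to an irreducible $V$ and dispose of the trivial cases. You then compute the Hilbert polynomial $P(q)-P(q-D')$ of the quotient by the homogenized equation, use the principal ideal theorem for purity, bound the sum of component degrees by the leading coefficient, and discard components at infinity, exactly as the paper does. The only differences are minor. You obtain the dimension drop directly from the fact that a proper closed irreducible subset of an irreducible variety has smaller dimension, whereas the paper reads it off the Hilbert-polynomial argument. You also invoke additivity of degree over primary components with multiplicities, whereas the paper passes to the reduced quotient and uses the exact sequence for a union.
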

\begin{proof}
We first consider an irreducible affine variety $V$ and a polynomial $h$ of degree at most
$D$. If $h$ vanishes identically on $V$, the intersection is $V$. If $V$ is a point where
$h$ does not vanish, the intersection is empty. A nonzero constant polynomial also gives
an empty intersection. The estimate is immediate in these cases.

We may therefore assume that $V$ is positive-dimensional and that $h$ is nonconstant and
does not vanish identically on $V$. Put $r\coloneqq\dim V$ and $m\coloneqq\deg h$. Let $B$ be the homogeneous coordinate ring of
$\overline V$, and let $\widetilde h$ be the homogenization of $h$ to degree $m$. The ring $B$
is a domain, and the image of $\widetilde h$ is nonzero. Multiplication by that image is
injective. It gives an exact sequence of graded modules
\[
 0\longrightarrow B(-m)\xrightarrow{\ \widetilde h\ }B
 \longrightarrow B/(\widetilde h)\longrightarrow0.
\]
Here the grading shift is defined by $B(-m)_q\coloneqq B_{q-m}$. Write $P_B(q)$ for the
Hilbert polynomial of $B$, and let $T$ be the projective hypersurface section defined on
$\overline V$ by $\widetilde h$. The exact sequence shows that the Hilbert polynomial of $T$ is
$P_B(q)-P_B(q-m)$. Its leading term is
\[
 \frac{m\deg\overline V}{(r-1)!}\,q^{r-1}.
\]
Consequently, $T$ has dimension $r-1$ and scheme-theoretic degree $m\deg\overline V$.

To compare this degree with the degrees of the components, note that every minimal prime
over the nonzero principal ideal $(\widetilde h)$ in the domain $B$ has height one by the
principal ideal theorem. The dimension formula for finitely generated algebras over a field
therefore shows that every irreducible component of the support of $T$ has dimension $r-1$.
Only minimal primes are involved, so we need no assertion about embedded components.

Write $Y_1,\ldots,Y_\ell$ for these irreducible components, and let $T_{\mathrm{red}}$ be the
reduced projective scheme with this support. Passing from $B/(\widetilde h)$ to its reduced
quotient cannot increase the dimension of any homogeneous component.
The Hilbert polynomial of $T_{\mathrm{red}}$ therefore has leading coefficient at most that
of $T$. For completeness, the degree of a reduced equidimensional projective scheme is the sum
of the degrees of its irreducible components. This follows from the exact sequence for a union:
if $J_1,J_2$ are the homogeneous ideals of two closed subsets in a homogeneous coordinate ring
$R$, then
\[
 0\longrightarrow R/(J_1\cap J_2)
 \longrightarrow R/J_1\oplus R/J_2
 \longrightarrow R/(J_1+J_2)\longrightarrow0.
\]
Distinct irreducible components of the same dimension meet in smaller dimension, so the last
term contributes no leading term of that dimension. Induction on the number of components
proves the assertion. Applying it to $T_{\mathrm{red}}$ gives
\[
 \sum_{j=1}^{\ell}\deg Y_j
 \leq m\deg\overline V.
\]

The affine intersection $V\cap Z(h)$ is the part of $T$ in the original affine chart. Its
irreducible components come from the components of $T$ that are not contained in the
hyperplane at infinity. Omitting the components at infinity cannot increase the sum of
degrees. Therefore, we have
\[
 \delta(V\cap Z(h))\leq D\deg\overline V.
\]
The argument also gives the asserted dimension drop when the intersection is nonempty.
The degree bound holds as well in the cases considered at the start, where the component
is retained or the intersection is empty.

For an arbitrary affine algebraic set $Z$, apply the bound to each irreducible component.
Every component of $Z\cap Z(h)$ is among the components of these intersections. Removing
repetitions and nonmaximal components can only decrease the sum of degrees. Thus
\[
 \delta(Z\cap Z(h))\leq D\delta(Z).
\]
Starting with affine space, whose degree is $1$, and applying this inequality successively to
$h_1,\ldots,h_j$ proves \eqref{eq:bezout}.
\end{proof}

\subsection{The critical locus}
We now impose the derivative equations one at a time. The degree bound limits the number
of components to consider, and the independent-column estimate bounds the probability that
a derivative vanishes identically on a positive-dimensional component.

Put $D\coloneqq d-1$ and
\[
 W_i\coloneqq Z(\del_1g,\ldots,\del_i g)\subset\A^n_{\overline{\F}_p},
 \qquad W_0\coloneqq\A^n.
\]

The following proposition bounds the probability that the critical locus is
positive-dimensional. It also bounds the size of a finite critical locus, which is
independent of the unused block $H$.

\begin{proposition}\label{prop:critical}
With the preceding notation, the probability of a positive-dimensional critical locus satisfies
\begin{equation}\label{eq:critical}
 \Prob(\dim W_n>0)
 \leq\left(\sum_{i=0}^{n-1}D^i\right)2^{-(s+1)}
 \leq n d^{n-1}2^{-(s+1)}.
\end{equation}
When $W_n$ is finite, it has at most $D^n$ geometric points. It is determined by
$U,G_1,\ldots,G_n$ and is independent of $H$.
\end{proposition}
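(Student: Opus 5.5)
We impose the derivative equations $\del_1 g,\ldots,\del_n g$ one at a time. At step $i$ we condition on $U$ and on $G_1,\ldots,G_{i-1}$, so that $W_{i-1}$ is fixed, while $G_i$ remains a fresh independent block of signs. Each $\del_i g=\del_i U+G_i^p$ has degree at most $d-1=D$. Lemma~\ref{lem:bezout} therefore gives $\delta(W_{i-1})\leq D^{i-1}$, so $W_{i-1}$ has at most $D^{i-1}$ irreducible components. By the second part of Lemma~\ref{lem:bezout}, every positive-dimensional component $V$ of $W_{i-1}$ on which $\del_i g$ does not vanish identically is cut down in dimension.

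Define the bad event $E_i$: some positive-dimensional component $V$ of $W_{i-1}$ has $\del_i g|_V\equiv0$. If no $E_i$ occurs, we argue by induction that $\dim W_i\leq n-i$. Indeed $\dim W_0=n$, and each component of $W_i$ lies in $V\cap Z(\del_i g)$ for some component $V$ of $W_{i-1}$. That intersection either has dimension at most $\dim V-1$, or $V$ is a point, in which case it has dimension $0\leq n-i$. Hence $\dim W_n\leq 0$, and
\[
\Prob(\dim W_n>0)\leq\sum_{i=1}^n\Prob(E_i).
\]

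\textbf{Main step.} We bound, for a fixed positive-dimensional irreducible $V$ (fixed by the conditioning), the probability that $\del_iU+G_i^p$ vanishes on $V$. We work in the $\F_p$-vector space $\mathcal O(V)=\overline{\F}_p[V]$, viewed over $\F_p$ as in the remark after Lemma~\ref{lem:rank}. We need $s+1$ of the vectors $(y^\beta)^p$, $|\beta|\leq s$, to be $\F_p$-linearly independent in $\mathcal O(V)$. Since $V$ has positive dimension, some coordinate $y_j$ is nonconstant on $V$. We claim $1,y_j,\ldots,y_j^{s}$ are linearly independent over $\overline{\F}_p$ on $V$. Otherwise $y_j$ would satisfy a nonzero univariate polynomial on the irreducible $V$, so it would be constant. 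Frobenius $x\mapsto x^p$ is an injective ring map on the reduced ring $\mathcal O(V)$ that is $\F_p$-linear. Hence $y_j^{pb}$ for $0\leq b\leq s$ are $\F_p$-independent.

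Lemma~\ref{lem:rank} applied to the sum $\sum_\beta\eps_{p\beta+e_i}(y^\beta)^p$ with target $-\del_iU|_V$ then gives probability at most $2^{-(s+1)}$. Since $\eps^p=\eps$, the signs enter linearly. A union bound over the at most $D^{i-1}$ components of $W_{i-1}$ gives $\Prob(E_i)\leq D^{i-1}2^{-(s+1)}$. Summing over $i=1,\ldots,n$ gives $\sum_{i=0}^{n-1}D^i\,2^{-(s+1)}\leq nd^{n-1}2^{-(s+1)}$.

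The remaining claims are direct. When $W_n$ is finite, Lemma~\ref{lem:bezout} gives $\#W_n=\delta(W_n)\leq D^n$. By \eqref{eq:derivatives}, the polynomials $\del_i g$ depend only on $U$ and $G_i$, so $W_n$ is a function of $(U,G_1,\ldots,G_n)$. These use signs disjoint from those of $H$, so $W_n$ is independent of $H$.

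The delicate point is the conditioning. $W_{i-1}$ must be measurable with respect to $U,G_1,\ldots,G_{i-1}$, and $G_i$ must be independent of these, which holds by the partition. We must also check that the linear-independence argument is over $\F_p$ and not $\overline{\F}_p$, which is exactly what the remark after Lemma~\ref{lem:rank} permits.
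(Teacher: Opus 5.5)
Your proof is correct and follows essentially the same route as the paper. You condition on $U$, reveal $G_1,\ldots,G_n$ in turn, bound the components of $W_{i-1}$ by $D^{i-1}$ via Lemma~\ref{lem:bezout}, and use a nonconstant coordinate $y_j$ on a positive-dimensional component with Lemma~\ref{lem:rank} to get $2^{-(s+1)}$ per component.

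There are two cosmetic differences, and neither affects the bound:
\begin{itemize}
\item You use Frobenius injectivity to make the vectors $y_j^{pb}$ independent and apply Lemma~\ref{lem:rank} to $G_i^p|_V$. The paper instead uses it to show the event fixes a single value of $G_i|_V$, then applies Lemma~\ref{lem:rank} with the vectors $y_j^b$.
\item Your bad event $E_i$ covers all positive-dimensional components, not just those of dimension $n-i+1$ at the first failing index.
\end{itemize}
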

\begin{proof}
Condition on $U$, and reveal $G_1,\ldots,G_n$ in order. Consider the first index $i$ at which
the inequality
\[
 \dim W_i\leq n-i
\]
fails. Before this index, $W_{i-1}$ has dimension at most $n-i+1$. A failure at $i$ can therefore occur only if
an irreducible component $V$ of $W_{i-1}$ of dimension $n-i+1$ is contained in $Z(\del_i
U+G_i^p)$. Indeed, other components already have dimension at most $n-i$, and a proper
hypersurface intersection lowers the dimension of each remaining component. There are at most
$D^{i-1}$ possible components by Lemma~\ref{lem:bezout}.

Every such $V$ is positive-dimensional. At least one coordinate function $y_j$ is nonconstant on
$V$. Otherwise every point of $V$ would have the same coordinates, contrary to its positive
dimension. Let
$\overline{\F}_p(V)$ denote the function field of $V$. In this field, the chosen coordinate
function is transcendental over $\overline{\F}_p$: an element algebraic over an algebraically
closed field belongs to that field. The powers
\[
 1,y_j,\ldots,y_j^s
\]
are therefore linearly independent over $\overline{\F}_p$, and hence over $\F_p$.

The condition $\del_i U+G_i^p=0$ on $V$ either is impossible or prescribes a single value for
the restriction of $G_i$ to $\overline{\F}_p(V)$. Indeed, two admissible restrictions would have
the same $p$th power and hence be equal. This uses injectivity of Frobenius in a field, not its
surjectivity. The displayed powers are distinct monomials in $G_i$. Lemma~\ref{lem:rank} shows
that the conditional probability of this event is at most $2^{-(s+1)}$.

The component $V$ depends on $U,G_1,\ldots,G_{i-1}$ and not on $G_i$, so the signs used in
this estimate remain unconditioned. Taking a union bound over the components and the first
failing index proves \eqref{eq:critical}. If no failure occurs, $W_n$ is empty or
zero-dimensional. In the latter case, Lemma~\ref{lem:bezout} bounds its number of points by
$D^n$. Equation~\eqref{eq:derivatives} also shows that $H$ is not used in the definition of
$W_n$.
\end{proof}

\subsection{The remaining equation at a high-degree point}
When the critical locus is finite, we use the remaining block $H$ to determine which of its
points lie on the hypersurface. A point of large residue degree gives many independent
evaluation directions. The following bound includes both the probability that the critical
locus is not finite and the probability that a high-degree point satisfies the remaining
equation.

\begin{proposition}\label{prop:high}
For every integer $R\geq0$, the following bound holds:
\begin{align}\label{eq:high}
 &\Prob(\exists\text{ singular closed }P\in\A^n_{\F_p}:\deg P>R)\notag\\
 &\qquad\leq\left(\sum_{i=0}^{n-1}D^i\right)2^{-(s+1)}
             +D^n2^{-\min(t+1,R+1)}.
\end{align}
\end{proposition}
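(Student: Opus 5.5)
Split the event according to whether the critical locus $W_n$ is positive-dimensional. By Proposition~\ref{prop:critical}, $\Prob(\dim W_n>0)$ is at most the first term on the right of \eqref{eq:high}. On the complementary event, $W_n$ is empty or finite with at most $D^n$ geometric points, and it is determined by $U,G_1,\ldots,G_n$ alone. We then condition on these blocks. Since $H$ uses a disjoint set of signs, it remains uniformly distributed with independent sign coefficients. By Lemma~\ref{lem:jet}, a closed point $P$ is singular on $g=0$ exactly when $g$ and all $\del_ig$ vanish at $P$. So every singular closed point with $\deg P>R$ is the Frobenius orbit of some geometric point $a\in W_n$ with $[\F_p(a):\F_p]>R$ and $g(a)=0$. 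Note that $g$ is nonzero, since its constant coefficient is a sign. A union bound over the at most $D^n$ points of $W_n$ then reduces the claim to the following: for a fixed geometric point $a$ of degree $e>R$, the conditional probability that $g(a)=0$ is at most $2^{-\min(t+1,R+1)}$.

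For fixed $a$, use \eqref{eq:partition}: $g(a)=c+H(a)^p$, where $c\coloneqq U(a)+\sum_i a_iG_i(a)^p$ is already fixed by the conditioning. Frobenius is injective on $\F_p(a)$, so the event $g(a)=0$ either is impossible or prescribes a single value of $H(a)\in\F_p(a)$. Now $H(a)=\sum_{|\beta|\le t}\eps_{p\beta}a^\beta$ is a signed sum of vectors in the $\F_p$-vector space $\F_p(a)$. By Lemma~\ref{lem:rank} and the remark after it, it suffices to find $\min(t+1,R+1)$ of the values $a^\beta$ that are linearly independent over $\F_p$.

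The main obstacle is producing these independent evaluations. If $a$ is not the origin, some coordinate $a_j$ is nonzero, but a single coordinate may generate only a proper subfield of $\F_p(a)$. The plan is therefore to apply Lemma~\ref{lem:filtration} to the algebra $A=\F_p(a)$, which has dimension $e$ over $\F_p$ and is generated by $a_1,\ldots,a_n$. That lemma shows the images of monomials of total degree at most $t$ span a subspace of dimension at least $\min(t+1,e)\ge\min(t+1,R+1)$. From these monomials, all of which occur in $H$, we choose that many independent ones. If $a$ is the origin, it has degree $1\le R$ unless $R=0$; in that case $\min(t+1,1)=1$, and the constant term of $H$ alone gives one independent vector. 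Summing over the points of $W_n$ gives the second term $D^n2^{-\min(t+1,R+1)}$, which completes the bound.
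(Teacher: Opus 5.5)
Your proposal is correct and follows the paper's proof essentially step for step. You exclude $\dim W_n>0$ via Proposition~\ref{prop:critical}, condition on $U,G_1,\ldots,G_n$, use injectivity of Frobenius to pin down a single value of $H$ at the point, and obtain $\min(t+1,e)$ independent evaluations from Lemma~\ref{lem:filtration} on the residue field before applying Lemma~\ref{lem:rank}. The differences are only cosmetic: you take the union bound over geometric rather than closed points of $W_n$ (both number at most $D^n$), and your separate treatment of the origin is harmless but unnecessary, since Lemma~\ref{lem:filtration} already covers $e=1$.
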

\begin{proof}
Exclude the event $\dim W_n>0$, whose probability is bounded by Proposition~\ref{prop:critical}.
Condition on $U,G_1,\ldots,G_n$. Then $W_n$ is a fixed finite set invariant under Frobenius. Its
closed points are the Frobenius orbits of its geometric points, so it has at most $D^n$ closed
points. The equation $g(P)=0$ at a closed point $P\in W_n$ is
\[
 H(P)^p=-U(P)-\sum_{i=1}^n y_i(P)G_i(P)^p.
\]
The residue field $\kappa(P)$ is finite, so its Frobenius map is bijective. This equation
prescribes one value of $H(P)$.

Write $e\coloneqq\deg P$. The coordinate images together generate $\kappa(P)$ as an
$\F_p$-algebra, although no single coordinate need generate the field.
Lemma~\ref{lem:filtration} shows that evaluation of polynomials of degree at most $t$ has
rank at least $\min(t+1,e)$. The coefficients of $H$ remain independent uniform signs.
Lemma~\ref{lem:rank} therefore bounds the probability of the prescribed value by
\[
 2^{-\min(t+1,e)}\leq2^{-\min(t+1,R+1)}\quad(e>R).
\]
Sum this bound over the at most $D^n$ candidate points and restore the excluded event.
Independence between evaluations at different points is not required.
\end{proof}

\section{A finite-field estimate}\label{sec:finite}

We now combine the estimates for the two degree ranges. We leave the prime and cutoff
unspecified, since the qualitative and quantitative results use different choices.

Define
\[
 \sigma_{n,d}(p)\coloneqq\Prob(X_{\bar f}\text{ is singular over }\overline{\F}_p).
\]
This probability refers to the original sign distribution reduced modulo $p$, not the uniform
distribution on all forms over $\F_p$.

\Needspace{5\baselineskip}
The first sum below bounds the contribution from points of small degree. The other two terms bound the
probability that the dimension-reduction argument fails and the probability that a
high-degree critical point lies on the hypersurface.

\begin{theorem}\label{thm:finite}
Let $n\geq1$ and $d\geq3$ be integers, and let $p$ be an odd prime. Put
\[
 D\coloneqq d-1,\qquad s\coloneqq\left\lfloor\frac{d-1}{p}\right\rfloor,
 \qquad t\coloneqq\left\lfloor\frac d p\right\rfloor.
\]
Let $R$ be an integer satisfying $0\leq R\leq\lfloor(d+1)/(n+1)\rfloor$, and define $K_e$ by
\eqref{eq:Ke}. Then the singularity probability satisfies
\begin{align}\label{eq:finite}
 \sigma_{n,d}(p)
 &\leq\frac{p}{p-1}\sum_{e=1}^R
             \frac{p^{ne}}e\Th_{K_e}(p)^{(n+1)e}\notag\\
 &\quad +(n+1)\left\{
      \left(\sum_{i=0}^{n-1}D^i\right)2^{-(s+1)}
             +D^n2^{-\min(t+1,R+1)}\right\}.
\end{align}
\end{theorem}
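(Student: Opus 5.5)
The plan is to split the event that $X_{\bar f}$ is singular according to the residue degree of a singular closed point, and to bound each piece by an estimate already proved. If $X_{\bar f}$ is singular over $\overline{\F}_p$, then its singular locus is a nonempty closed subset defined over $\F_p$. It therefore contains a geometric point, and the Frobenius orbit of that point is a closed point $P$ of $\PP^n_{\F_p}$ at which $X_{\bar f}$ is singular. Two cases remain: either some singular closed point has $\deg P\leq R$, or some singular closed point has $\deg P>R$. A union bound over these two cases gives $\sigma_{n,d}(p)$ at most the sum of their probabilities.

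The low-degree case is exactly \eqref{eq:low}. We recall why it covers every point. The coordinate vertices are never on $X_{\bar f}$, since each pure-power coefficient is $\pm1\neq0$ modulo $p$. Every other closed point lies in some standard chart $x_j\neq0$ as a non-origin point, and we assign it to one such chart. On that chart the dehomogenization $g$ carries independent signs on all monomials of degree at most $d$, and $g$ is nonzero because the signs are nonzero modulo $p$. By \eqref{eq:jacobian} and \eqref{eq:homogeneous-derivatives}, singularity of $X_{\bar f}$ at $P$ is singularity of $g=0$ at $P$. The condition $R\leq\lfloor(d+1)/(n+1)\rfloor$ makes each $K_e\geq1$ and gives $(n+1)eK_e\leq d+1$. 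Proposition~\ref{prop:local} then applies, and summing with \eqref{eq:point-count} yields the first line of \eqref{eq:finite}.

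For the high-degree case, every closed point of $\PP^n_{\F_p}$ lies in at least one of the $n+1$ standard affine charts. If $X_{\bar f}$ has a singular closed point of degree $>R$, then in some chart $x_j\neq0$ the dehomogenized polynomial $g_j$ has a singular closed point of the same degree, since dehomogenization preserves residue fields. By \eqref{eq:jacobian}, this means $g_j=\del_1g_j=\cdots=\del_ng_j=0$ there. Each $g_j$ has exactly the distribution considered in Section~\ref{sec:high}, with one independent sign per monomial of degree at most $d$. Proposition~\ref{prop:high} therefore bounds each chart's probability by the bracketed quantity. A union bound over the $n+1$ charts gives the factor $n+1$; no independence between charts is needed.

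The only point needing care, and the main obstacle, is checking that the events match the hypotheses exactly. Proposition~\ref{prop:high} is stated for singular closed points of $\A^n_{\F_p}$ of the affine hypersurface $g_j=0$, including the origin and points of any degree above $R$. We must confirm that the affine singularity condition coincides with projective singularity of $X_{\bar f}$ in every characteristic, including when $p\mid d$. This follows from \eqref{eq:homogeneous-derivatives}, because $F=0$ is retained among the equations, so $\del_0F$ vanishes automatically once $g$ and the $\del_ig$ vanish. With this checked, adding the two contributions gives \eqref{eq:finite}.
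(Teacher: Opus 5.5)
Your proposal is correct and follows the paper's proof: a union bound over the residue degree of a singular closed point, with \eqref{eq:low} covering degrees at most $R$ and Proposition~\ref{prop:high}, applied on each of the $n+1$ standard affine charts, covering the rest. Your additional checks (the coordinate vertices avoid $X_{\bar f}$, $(n+1)eK_e\leq d+1$, and affine and projective singularity agree even when $p\mid d$ via \eqref{eq:homogeneous-derivatives}) are the same facts the paper establishes in Sections~\ref{sec:specialization} and~\ref{sec:local} before stating the theorem.
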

\begin{proof}
Equation~\eqref{eq:low} accounts for closed points of degree at most $R$ in projective space.
Apply Proposition~\ref{prop:high} on each of the $n+1$ standard affine charts for the remaining
degrees. Every geometric point is defined over a finite extension of $\F_p$, so these two degree
ranges cover all potential singular points. A union bound gives \eqref{eq:finite}.
\end{proof}

For the qualitative conjecture, it is enough to use $p^2$ bases at each point. This gives
the following simpler estimate. For a fixed prime, the degree-dependent error tends to
zero exponentially, leaving a term of order $1/p$.

\begin{corollary}\label{cor:simple}
Let $p$ be a prime at least $2^{n+2}$, and set
\[
 R_0\coloneqq\left\lfloor\frac{d+1}{(n+1)p^2}\right\rfloor.
\]
Then the following estimate holds:
\begin{equation}\label{eq:simple}
 \sigma_{n,d}(p)\leq\frac{3\cdot2^{n+1}}p
                  +(n+1)^2d^n2^{-(R_0+1)}.
\end{equation}
\end{corollary}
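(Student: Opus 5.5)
The plan is to specialize Theorem~\ref{thm:finite} to the cutoff $R\coloneqq R_0$ and then bound its three terms one at a time. First I check that $R_0$ is admissible. Since $p^2\geq1$, we have $R_0\leq\lfloor(d+1)/(n+1)\rfloor$, and clearly $R_0\geq0$. If $R_0=0$, the low-degree sum is empty and only the high-degree terms remain, so the argument below covers that case as well.

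\emph{Low-degree sum.} For $1\leq e\leq R_0$ we have $(n+1)ep^2\leq(n+1)R_0p^2\leq d+1$, so $K_e\geq p^2$. The average $\Th_K(p)$ is nonincreasing in $K$, so Lemma~\ref{lem:theta} gives $\Th_{K_e}(p)\leq\Th_{p^2}(p)<2/p$. Each summand is then at most
\[
 \frac{p^{ne}}{e}\left(\frac2p\right)^{(n+1)e}\leq x^e,
 \qquad x\coloneqq\frac{2^{n+1}}{p}.
\]
The hypothesis $p\geq2^{n+2}$ gives $x\leq1/2$, so the geometric series is at most $x/(1-x)\leq2x$. Since $p\geq3$, the prefactor satisfies $p/(p-1)\leq3/2$. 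Together these bound the whole first term by $3x=3\cdot2^{n+1}/p$.

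\emph{High-degree terms.} I compare the exponents with $R_0$. The inequality $R_0\leq s$ follows from
\[
 \frac{d+1}{(n+1)p^2}\leq\frac{d+1}{2p^2}\leq\frac{d-1}{p},
\]
which holds because $d+1\leq2p(d-1)$ for $d\geq3$, and from monotonicity of the floor function. Since $t\geq s$, we also get $\min(t+1,R_0+1)=R_0+1$ and $2^{-(s+1)}\leq2^{-(R_0+1)}$. The bracket in \eqref{eq:finite} is therefore at most
\[
 \left(\sum_{i=0}^{n}D^i\right)2^{-(R_0+1)}\leq(n+1)d^n2^{-(R_0+1)},
\]
using $D<d$. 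Multiplying by the factor $n+1$ gives the second term of \eqref{eq:simple}.

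There is no genuine obstacle here; the argument is bookkeeping. The only delicate point is the comparison $R_0\leq s$, which is what lets the sieve exponent $s+1$ be replaced by the weaker $R_0+1$. That comparison uses $n\geq1$ and $d\geq3$.
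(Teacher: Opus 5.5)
Your proposal is correct and follows the paper's proof essentially step for step. You take $R=R_0$ in Theorem~\ref{thm:finite}, use $K_e\geq p^2$ with Lemma~\ref{lem:theta} to bound the low-degree sum by $3\cdot2^{n+1}/p$, and use $R_0\leq s\leq t$ to collapse the high-degree terms into $(n+1)^2d^n2^{-(R_0+1)}$. You also check the admissibility condition $R_0\leq\lfloor(d+1)/(n+1)\rfloor$ and the case $R_0=0$ explicitly, which the paper does not spell out.
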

\begin{proof}
For a point of degree $e$ with $1\leq e\leq R_0$, the definition of $K_e$ gives $K_e\geq p^2$.
Since $\Th_K(p)$ is nonincreasing in $K$, Lemma~\ref{lem:theta} gives
\[
 \Th_{K_e}(p)\leq\Th_{p^2}(p)<\frac2p.
\]
Proposition~\ref{prop:local} therefore bounds the singularity probability at this point by
$(2/p)^{(n+1)e}$. Put $a\coloneqq2^{n+1}/p$, which is at most $1/2$. By \eqref{eq:point-count}, the
low-degree sum is at most
\[
 \frac32\sum_{e\geq1}\frac{a^e}{e}
 \leq\frac32\frac{a}{1-a}\leq3a.
\]
The inequality $d+1\leq(n+1)p(d-1)$ implies $R_0\leq s\leq t$ for the parameters in question. The
contribution of the high-degree points in Theorem~\ref{thm:finite} is at most
\[
 (n+1)(n d^{n-1}+d^n)2^{-(R_0+1)}
 \leq(n+1)^2d^n2^{-(R_0+1)}.
\]
This proves the assertion, including the case $R_0=0$.
\end{proof}

\section{Proofs of the main results}\label{sec:proofs}

We apply the finite-field estimates to prove the results over the complex numbers. For the
quantitative smoothness bound, the prime grows with the degree. A fixed small prime gives
the exponential bound for positive-dimensional singular loci and the resulting
irreducibility estimate.

\subsection{Proof of the quantitative bound}
We choose the prime comparable to $\sqrt d$ and the degree cutoff comparable to $\log d$.
Points of degree one contribute $O_n(d^{-1/2})$, and the other terms in the sieve are
smaller.

\begin{proof}[Proof of Theorem~\ref{thm:main}]
Fix $n$, and first suppose $d$ is sufficiently large in terms of $n$. Bertrand's postulate (see
the elementary proof in \cite{Erdos}), applied to $\lfloor\sqrt d\rfloor$, gives an odd prime
satisfying
\begin{equation}\label{eq:prime-choice}
 \tfrac12\sqrt d\leq p\leq2\sqrt d.
\end{equation}
Choose
\[
 R\coloneqq\left\lceil(n+2)\log_2 d\right\rceil.
\]
Recall the definitions $D\coloneqq d-1$, $s\coloneqq\lfloor(d-1)/p\rfloor$, and
$t\coloneqq\lfloor d/p\rfloor$ from Theorem~\ref{thm:finite}. The cutoff $R$ grows logarithmically with $d$, whereas $s$ grows on
the order of $\sqrt d$. Thus, for sufficiently large $d$ in terms of $n$, we have
\[
 R\leq s\leq t,\qquad R\leq\left\lfloor\frac{d+1}{n+1}\right\rfloor.
\]
In particular, Theorem~\ref{thm:finite} applies.

For every integer $e$ between $1$ and $R$, the number $(d+1)/((n+1)e)$ is at least $1$.
Using the inequality $\lfloor x\rfloor\geq x/2$, valid for every real $x\geq1$, we obtain
\[
 K_e\geq\frac{d+1}{2(n+1)e}.
\]
Put $A_n\coloneqq2+\sqrt{2(n+1)}$. Lemma~\ref{lem:theta} and
\eqref{eq:prime-choice} imply
\begin{equation}\label{eq:theta-choice}
 \Th_{K_e}(p)
 \leq\frac2{\sqrt d}+\sqrt{\frac{2(n+1)e}{d+1}}
 \leq A_n\sqrt{\frac e d}.
\end{equation}
Put $a\coloneqq(n+1)/2$ and $B_n\coloneqq2^n A_n^{n+1}$. Since $p^n\leq2^n d^{n/2}$, the low-degree contribution
in \eqref{eq:finite} is at most
\begin{equation}\label{eq:optimized-low}
 \frac32\sum_{e=1}^R\frac1e
          \left(\frac{B_n e^a}{\sqrt d}\right)^e.
\end{equation}
The term with $e=1$ is $3B_n/(2\sqrt d)$. Let
\[
 \rho\coloneqq\frac{B_n R^a}{\sqrt d}.
\]
We have $\rho\leq1/2$ for all sufficiently large $d$. The remaining terms in
\eqref{eq:optimized-low} are at most
\[
 \frac32\sum_{e=2}^R\rho^e
 \leq\frac32\frac{\rho^2}{1-\rho}
 \ll_n\frac{(\log d)^{n+1}}d
 =o_n(d^{-1/2}).
\]
This bounds the whole low-degree contribution by $O_n(d^{-1/2})$.

For the high-degree contribution, \eqref{eq:prime-choice} gives
\[
 s+1>\frac{d-1}{p}\geq\frac{\sqrt d}{3}\qquad(d\geq3).
\]
Consequently, we have
\begin{align*}
 (n+1)\left(\sum_{i=0}^{n-1}D^i\right)2^{-(s+1)}
 &\leq n(n+1)d^{n-1}2^{-\sqrt d/3},\\
 (n+1)D^n2^{-\min(t+1,R+1)}
 &\leq(n+1)d^n2^{-(R+1)}
 \leq\tfrac12(n+1)d^{-2}.
\end{align*}
The first expression is $o_n(d^{-2})$. Combining these bounds with \eqref{eq:finite} and
\eqref{eq:prob-specialization} proves \eqref{eq:main} for large $d$. Increasing $C_n$ handles
the finitely many remaining degrees.
\end{proof}

There is also a simpler proof of the qualitative conjecture, in which the degree and the
prime tend to infinity successively.

\begin{remark}
Fix a prime $p\geq2^{n+2}$ and let $d$ tend to infinity in
Corollary~\ref{cor:simple}. By \eqref{eq:prob-specialization}, we obtain
\[
 \limsup_{d\to\infty}\Prob(X_f\text{ is singular})
 \leq\frac{3\cdot2^{n+1}}p.
\]
Letting $p$ tend to infinity proves that the limit is zero. The proof of Theorem~\ref{thm:main}
improves the quantitative estimate by allowing the number of bases to depend on the degree of
the closed point. It uses no independence between reductions at different primes.
\end{remark}

\subsection{Positive-dimensional singularities and irreducibility}\label{sec:positive}
A positive-dimensional singular locus gives a positive-dimensional critical locus on some
affine chart. Thus the dimension-reduction estimate suffices, and we do not need the local
concentration bound. We apply the estimate at a fixed prime and then use the intersection
of two hypersurface factors to deduce the irreducibility bound.

\begin{proof}[Proof of Theorem~\ref{thm:positive}]
First work over $\overline{\F}_p$ for an odd prime $p$. If the projective singular locus has a
positive-dimensional irreducible component, some standard affine chart meets that component in a
nonempty open subset. The singular locus in that chart is positive-dimensional and is contained
in the critical locus $W_n$. Proposition~\ref{prop:critical} and a union bound over the $n+1$
charts give
\begin{equation}\label{eq:positive-finite}
 \Prob(\dim\Sing X_{\bar f}\geq1)
 \leq n(n+1)d^{n-1}2^{-\lfloor(d-1)/p\rfloor-1}.
\end{equation}
Apply Lemma~\ref{lem:specialization} and take $p=3$ to obtain \eqref{eq:positive} over $\C$.
This argument does not use the low-degree concentration estimate.
\end{proof}

To deduce irreducibility, we use the following standard intersection argument, which also
covers repeated factors (compare the remark following \cite[Theorem~3.2]{Poo}).

\begin{proof}[Proof of Corollary~\ref{cor:absolute}]
Suppose that the ambient dimension is at least three and that $f$ is reducible. Write $f=AB$
with nonconstant polynomials over $\C$.
Both factors are homogeneous. Indeed, in a polynomial ring over a field, the smallest and
largest total degrees occurring in a product are the sums of the corresponding degrees in its
factors. Since these degrees coincide for $f$, they coincide for each factor.

Let $Y\subset\PP^n_\C$ be the projective set defined by $A=B=0$. The ideal $(A,B)$ in
$\C[x_0,\ldots,x_n]$ has height at most $2$ by the principal ideal theorem. Its affine zero set
therefore has dimension at least $n-1$. This set is a homogeneous cone, so its projectivization
$Y$ has dimension at least $n-2$. At every point of $Y$, the equation $f$ and all its first
derivatives vanish by the product rule. Thus
\[
 Y\subseteq\Sing X_f,
 \qquad \dim\Sing X_f\geq n-2\geq1.
\]
This includes the case where $A$ and $B$ have a common factor. Failure of absolute
irreducibility therefore implies the event already bounded in \eqref{eq:positive}.
\end{proof}

\section{AxiomProver Lean certificate}\label{sec:AxiomProver}

This paper was written using human-AI collaboration. Working with AxiomProver, the authors began from the statement of the conjecture and proceeded through literature search, numerical experimentation, and the proposal and rejection of many candidate strategies before the proofs below took shape. At every stage the authors set the direction and made the mathematical judgments, and they take full responsibility for the correctness of the results. AxiomProver is an AI system for mathematical research through formal proof, currently under development at Axiom Math.

Theorems~\ref{thm:main}
and~\ref{thm:positive}, together with Corollaries~\ref{cor:BS} and~\ref{cor:absolute},
have been formalized in Lean with AxiomProver, assuming the existing literature specified
below. The repository contains the formal statements, proofs, and build
instructions:

\begin{center}
\url{https://github.com/AxiomMath/BrowningSawin}
\end{center}

Beyond Lean and Mathlib, the development assumes five mathematical statements
from the literature as hypotheses to the main targets. Their mathematical content are as follows.
\begin{enumerate}
\item The affine B\'ezout inequality. For $j$
polynomials in $n$ variables over an algebraically closed field, each of degree
at most $D\geq1$, the sum of the degrees of the projective closures of all
irreducible components of their common zero set is at most $D^j$ (for example, see
\cite{Heintz} or \cite[Example~8.4.6]{Fulton}).

\item Dimension drop under a nonvanishing equation. If $V$ is an irreducible
positive-dimensional affine algebraic set over an algebraically closed field
and $h$ does not vanish identically on $V$, every irreducible component of
$V\cap Z(h)$ has dimension at most $\dim V-1$ (for example, see 
\cite[Chapter~I, \S7]{Hartshorne} or \cite[Example~8.4.6]{Fulton}).

\item Specialization of projective dimension. For a homogeneous ideal with integer
coefficients, the projective zero scheme over $\overline{\F}_p$ obtained by
reduction modulo any prime $p$ has dimension at least that of the corresponding
scheme over $\C$. This is the projective specialization case of upper
semicontinuity of fiber dimension for proper morphisms
\cite[Tag~0D4I]{Stacks}.

\Needspace{6\baselineskip}
\item The affine hypersurface Jacobian criterion. At a geometric point on a
hypersurface defined by a nonzero polynomial over a perfect field, the point
lies outside the smooth locus if and only if all first partial derivatives
vanish there. The hypersurface retains its scheme structure, including
repeated factors \cite[Chapter~I, \S5; Chapter~II, \S8]{Hartshorne}.

\item Hilbert--Serre eventual polynomiality. The projective Hilbert function used in
the development agrees with a polynomial over $\Q$ in all sufficiently large
degrees. This supplies the existence of the Hilbert polynomial used to define
projective degree \cite[Chapter~I, \S7]{Hartshorne}.
\end{enumerate}


\begin{thebibliography}{99}

\bibitem{BSK}
L.~Bary-Soroker and G.~Kozma,
\emph{Is a bivariate polynomial with $\pm1$ coefficients irreducible? Very likely!},
International Journal of Number Theory \textbf{13} (2017), no.~4, 933--936.
\href{https://doi.org/10.1142/S1793042117500488}{doi:10.1142/S1793042117500488}.
\url{https://arxiv.org/abs/1602.06530}.

\bibitem{BS}
T.~Browning and W.~Sawin,
\emph{Random Diophantine equations of large degree},
preprint (2025), arXiv:2510.26191.
\url{https://arxiv.org/abs/2510.26191}.

\bibitem{Erdos}
P.~Erd\H{o}s,
\emph{Beweis eines Satzes von Tschebyschef},
Acta Litterarum ac Scientiarum Regiae Universitatis Hungaricae Francisco-Josephinae,
Sectio Scientiarum Mathematicarum \textbf{5} (1932), 194--198.
\url{https://acta.bibl.u-szeged.hu/13396/}.

\bibitem{Fulton}
W.~Fulton,
\emph{Intersection Theory}, second edition,
Ergebnisse der Mathematik und ihrer Grenzgebiete, 3.~Folge, vol.~2,
Springer-Verlag, Berlin, 1998.
\href{https://doi.org/10.1007/978-1-4612-1700-8}{doi:10.1007/978-1-4612-1700-8}.

\bibitem{Hartshorne}
R.~Hartshorne,
\emph{Algebraic Geometry}, Graduate Texts in Mathematics, vol.~52,
Springer-Verlag, New York--Heidelberg, 1977.
\href{https://doi.org/10.1007/978-1-4757-3849-0}{doi:10.1007/978-1-4757-3849-0}.

\bibitem{Heintz}
J.~Heintz,
\emph{Definability and fast quantifier elimination in algebraically closed fields},
Theoretical Computer Science \textbf{24} (1983), no.~3, 239--277.
\href{https://doi.org/10.1016/0304-3975(83)90002-6}{doi:10.1016/0304-3975(83)90002-6}.

\bibitem{HPX}
J.~He, H.~T.~Pham, and M.~W.~Xu,
\emph{Universality for low-degree factors of random polynomials over finite fields},
International Mathematics Research Notices \textbf{2023} (2023), no.~17, 14752--14794.
\href{https://doi.org/10.1093/imrn/rnac239}{doi:10.1093/imrn/rnac239}.
Preprint version: \url{https://arxiv.org/abs/2201.06156v2}.
References to Proposition~3.3 use the numbering of this version.

\bibitem{KZ}
G.~Kozma and O.~Zeitouni,
\emph{On common roots of random Bernoulli polynomials},
International Mathematics Research Notices \textbf{2013} (2013), no.~18, 4334--4347.
\href{https://doi.org/10.1093/imrn/rns164}{doi:10.1093/imrn/rns164}.
\url{https://arxiv.org/abs/1109.2316}.

\bibitem{PSZ}
R.~Peled, A.~Sen, and O.~Zeitouni,
\emph{Double roots of random Littlewood polynomials},
Israel Journal of Mathematics \textbf{213} (2016), no.~1, 55--77.
\href{https://doi.org/10.1007/s11856-016-1328-3}{doi:10.1007/s11856-016-1328-3}.
\url{https://arxiv.org/abs/1409.2034}.

\bibitem{Poo}
B.~Poonen,
\emph{Bertini theorems over finite fields},
Annals of Mathematics (2) \textbf{160} (2004), no.~3, 1099--1127.
\href{https://doi.org/10.4007/annals.2004.160.1099}{doi:10.4007/annals.2004.160.1099}.
\url{https://math.mit.edu/~poonen/papers/bertini.pdf}.

\bibitem{Stacks}
\emph{The Stacks Project}, Lemma~37.30.5, Tag~0D4I.
\url{https://stacks.math.columbia.edu/tag/0D4I}.

\end{thebibliography}
\end{document}